\newtheorem{theorem}{Theorem}[section]
\newtheorem{lemma}[theorem]{Lemma}
\newtheorem{cor}[theorem]{Corollary}
\theoremstyle{definition}
\newtheorem{defn}[theorem]{Definition}
\theoremstyle{remark}
\numberwithin{equation}{section}
\newcommand{\re}{\mbox{Re}\,\xspace}
\newcommand{\inner}[1]{\left \langle #1 \right \rangle\xspace}
\newcommand{\nf}{\eqref{eq: normal form}\xspace}
\newcommand{\po}{\eqref{eq: periodic orbit}\xspace}
\newcommand{\norm}[1]{\left \lVert #1 \right \rVert\xspace}
\begin{document}

\title[Control by time delayed feedback near a Hopf bifurcation point]{Control by time delayed feedback near\\ a Hopf bifurcation point}

\author{S. Verduyn Lunel}
\address{}
\curraddr{}
\email{}
\thanks{}

\author{B. de Wolff}
\address{}
\curraddr{}
\email{}
\thanks{}

\subjclass[2000]{Primary: 34K13 Secondary: 34K18, 34K40}

\keywords{Pyragas control, time--delayed feedback control, Hopf bifurcation, neutral equations}

\date{}

\begin{abstract} 
In this paper we study the stabilization of rotating waves using time delayed feedback control. It is our aim to put some recent results in a broader context by discussing two different methods to determine the stability of the target periodic orbit in the controlled system: 1) by directly studying the Floquet multipliers and 2) by use of the Hopf bifurcation theorem. We also propose an extension of the Pyragas control scheme for which the controlled system becomes a functional differential equation of neutral type. Using the observation that we are able to determine the direction of bifurcation by a relatively simple calculation of the root tendency, we find stability conditions for the periodic orbit as a solution of the neutral type equation. 
\end{abstract}

\maketitle

Stabilization of motion is a subject of interest in applications, where one often wishes the observed motion to be stable. Pyragas control \cite{pyragascontrol}, a form of time--delayed feedback control, provides a method to stabilize unstable periodic solutions of ordinary differential equations which has been sucessfully implemented in experimental set-ups \cite{experimentalverification1, experimentalverification2}. It can also be used to stabilize rotating waves in lasers \cite{lasers} and in coupled networks \cite{couplednetworkspyragas}. 
To be able to apply Pyragas control in physical applications, one is of course interested for which strength of the control term stability can be achieved.
Furthermore, in physical set-ups it is also relevant to have knowledge of the overall dynamics of the controlled system. Since by applying Pyragas control we turn a finite dimensional system into an infinite dimensional system, one expects the dynamics of the system to change singificantly. Therefore, the controlled system is an interesting object of study in itself \cite{oddnumberlimitation}. 

Various variations to the Pyragas control scheme have been proposed as well. For example, in \cite{extendedcontrol} the control term contains an infinite number of delay terms in which each delay is chosen to be a multiple of the period of the target periodic orbit; and in \cite{periodiccontrolgain} the control matrix is chosen to be non--autonomous. \\

In this article we continue an analysis started in \cite{oddnumberlimitation} and apply Pyragas control to the differential equation
\begin{align} \label{eq: uncontrolled normal form}
\dot{z}(t) = (\lambda + i) z(t) + (1 + i \gamma) \left| z(t) \right|^2 z(t)
\end{align}
where $\lambda, \gamma \in \mathbb{R}$ are parameters and $z: \mathbb{R} \to \mathbb{C}$. 
Solutions of the form $A(x,t) = z(t)e^{i \alpha x}$ of the Ginzburg--Landau equation
\begin{align*}
\frac{\partial A}{\partial t} (x, t) =(\lambda + i) \frac{\partial^2}{\partial x^2} A(x, t) + (1 + i \gamma) \left| A(x, t)\right|^2 A(x, t), \quad x \in \mathbb{R}, t \geq 0
\end{align*}
reduce, after rescaling, to solutions of \eqref{eq: uncontrolled normal form} \cite{ginzburglandau}. Equation \eqref{eq: uncontrolled normal form} can be used to model a range of physical phenomena, and arises as a model for Stuart-Landau oscillators \cite{clustersynchronization, stuartlandauoscillators} and laser dynamics \cite{lasers}. 

A useful property of \eqref{eq: uncontrolled normal form} is that we can explicitly find a periodic solution and that we can analytically determine its stability. Indeed, for $\lambda < 0$, system \eqref{eq: uncontrolled normal form} has a periodic solution given by
\begin{align} \label{eq: periodic orbit}
z(t) = \sqrt{- \lambda} e^{i(1- \gamma \lambda) t}
\end{align}
with period $T = 2 \pi / (1 - \gamma \lambda)$. For $\gamma \lambda < 1$, \eqref{eq: periodic orbit} is unstable as a solution of \eqref{eq: uncontrolled normal form} (see Section \ref{sec: uncontrolled system}). For the controlled system we write
\begin{align} \label{eq: normal form}
\dot{z}(t) = (\lambda + i) z(t) + (1 + i \gamma) \left| z(t) \right|^2 z(t) - Ke^{i \beta} \left[z(t) - z(t - \tau) \right]
\end{align}
with $K \in \mathbb{R}, \tau \geq 0$ and $\beta \in [0, \pi]$. The controlled system is designed such that for $\tau = T = 2 \pi / (1- \gamma \lambda)$, the function \eqref{eq: periodic orbit} is still a solution of \eqref{eq: normal form}. 

In \cite{counterexample}, the periodic solution \eqref{eq: periodic orbit} of \eqref{eq: uncontrolled normal form} was used as a counterexample to the claim that periodic orbits with an odd number of Floquet multipliers outside the unit circle cannot be stabilized using Pyragas control. In \cite{oddnumberlimitation}, the bifurcation diagram of the controlled system \eqref{eq: normal form} was studied in more detail, and it was shown that the stability of \eqref{eq: periodic orbit} as a solution of \eqref{eq: normal form} can be determined using the Hopf bifurcation theorem. In fact, it was shown that the periodic solution \eqref{eq: periodic orbit} of the system \eqref{eq: normal form} emmanates from a Hopf bifurcation. By using the direction of the Hopf bifurcation (i.e. whether the Hopf bifurcation is sub-- or supercritical), one is then able, for $\lambda$ near the bifurcation point and given $\gamma$, to find conditions on the parameters $K, \beta$ that ensure that the periodic orbit \eqref{eq: periodic orbit} is stable as a solution of \eqref{eq: normal form}. \\

In Sections \ref{sec: uncontrolled system}--\ref{sec: lambda varieren}, we place the results from \cite{oddnumberlimitation} in a broader context using the theory developed for delay equations in \cite{delayequations} and, in particular discuss and compare different methods to determine the stability of \eqref{eq: periodic orbit} as a solution of \eqref{eq: normal form}. We start by exploring the dynamics of the uncontrolled system \eqref{eq: uncontrolled normal form} in Section \ref{sec: uncontrolled system}. In Section \ref{sec: multipliers} we give necessary conditions for \po to be stable as a solution of \nf by direct investigation of the Floquet multipliers. As a different approach to determine the stability of \eqref{eq: periodic orbit} as a solution of \eqref{eq: normal form}, we use -- inspired by \cite{oddnumberlimitation} -- the Hopf bifurcation theorem. In Section \ref{sec: pyragas curve} we approach the bifurcation point over a different curve in the parameter plane than was done in \cite{oddnumberlimitation}. This enables us to give stability conditions for a wider range of parameter values. We choose the curve through parameter plane in such a way that we a priori know for which points on the curve a periodic solution exists. A relatively simple calculation of the root tendency of the roots of the characterstic equation then directly yields the direction of the bifurcation. In Section \ref{sec: lambda varieren}, we give a direct proof of the result from \cite{oddnumberlimitation} using the explicit closed--form formula's to determine the direction of the Hopf bifurcation developed in \cite{delayequations}.

In Section \ref{sec: afgeleide control} we propose a variation to the Pyragas control scheme for which the controlled system becomes a functional differential equation of neutral type. We apply the proposed control scheme to the system \eqref{eq: uncontrolled normal form} and use the methods developed in Section \ref{sec: pyragas curve} to determine the stability of the target periodic orbit. 

\section{Dynamics of the uncontrolled system} \label{sec: uncontrolled system}
Before studying the dynamics of the uncontrolled systems, we make some remarks on terminology used throughout the article. 

\begin{defn}
Let $r > 0$, $\mathcal{S} = C([-r, 0], \mathbb{R}^n)$ equipped with the norm $\norm{\phi}_\infty = \sup_{\theta \in [-r, 0]} \left| \phi(\theta) \right|$. Let $F: \mathcal{S} \to \mathbb{R}^n$. Let us study the retarded functional differential equation 
\begin{align} \label{eq: delay equation stabiliteit}
\dot{x}(t) = F(x_t) \quad  t \geq 0
\end{align}
where $x_t(\theta) = x(t + \theta)$ for $\theta \in [-r, 0]$. Denote by $T(t)$ the semi--flow associated to \eqref{eq: delay equation stabiliteit}. Let $x_0$ be an equilibrium of \eqref{eq: delay equation stabiliteit}. Then we say that $x_0$ is \emph{stable} if it is asymptotically stable, i.e. the following two conditions are satisfied: 1) For every $\epsilon > 0$ there exists a $\delta > 0$ such that if $\norm{\phi - x_0}_\infty < \delta$ for $\phi \in \mathcal{S}$, then $\norm{T(t)\phi - x_0}_\infty < \epsilon$ for all $t \geq 0$. 2) There exists a $b > 0$ such that if $\norm{\phi - x_0} < b$ for $\phi \in \mathcal{S}$, then $\lim_{t \to \infty} \norm{T(t) \phi - x_0}_\infty =  0$. We say that $x_0$ is \emph{unstable} if it is asymptotically unstable. 
\end{defn}  

Note that we do not require exponential stability. However, when we determine that a fixed point is stable by establishing that all the associated eigenvalues are in the left half of the complex plane, exponential stability automatically follows.

To study the uncontrolled system \eqref{eq: uncontrolled normal form}, we can take the real and imaginary parts and view \eqref{eq: uncontrolled normal form} as a system on $\mathbb{R}^2$ given by
\begin{align} \label{eq: uncontrolled normal form on r2}
\begin{pmatrix}
\dot{x}(t) \\ \dot{y}(t)
\end{pmatrix} 
 = \begin{pmatrix}
 \lambda & -1 \\
 1 & \lambda
 \end{pmatrix}
 \begin{pmatrix}
 x(t) \\ y(t)
 \end{pmatrix}
+ (x^2(t) + y^2(t)) \begin{pmatrix}
1 & - \gamma \\
\gamma & 1
\end{pmatrix}
\begin{pmatrix}
x(t) \\ y(t)
\end{pmatrix}
\end{align}
Note that $(x, y) = (0, 0)$ is an equilibrium of this system, and the linearization of \eqref{eq: uncontrolled normal form on r2} can be used to determine its stability. 

\begin{lemma} \label{lem: stability equilibrium uncontrolled system}
If $\lambda < 0$, the equilibrium $(x, y) = (0, 0)$ of \eqref{eq: uncontrolled normal form on r2} is stable. If $\lambda > 0$, the equilibrium $(x, y) = (0, 0)$ of \eqref{eq: uncontrolled normal form on r2} is unstable.
\end{lemma}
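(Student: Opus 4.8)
The plan is to reduce the question to a standard linear stability analysis, since the nonlinear term in \eqref{eq: uncontrolled normal form on r2} is cubic and therefore does not affect the linearization at the origin. First I would write down the linearization of \eqref{eq: uncontrolled normal form on r2} at $(x,y) = (0,0)$, which is simply
\begin{align*}
\begin{pmatrix}
\dot{x}(t) \\ \dot{y}(t)
\end{pmatrix}
=
\begin{pmatrix}
\lambda & -1 \\
1 & \lambda
\end{pmatrix}
\begin{pmatrix}
x(t) \\ y(t)
\end{pmatrix},
\end{align*}
because the quadratic factor $x^2(t) + y^2(t)$ multiplying the linear-in-$(x,y)$ correction vanishes to second order at the origin. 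The eigenvalues of the matrix $\left( \begin{smallmatrix} \lambda & -1 \\ 1 & \lambda \end{smallmatrix} \right)$ are $\lambda \pm i$, so for $\lambda < 0$ both eigenvalues lie in the open left half-plane and for $\lambda > 0$ both lie in the open right half-plane.

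Next I would invoke the principle of linearized stability for ordinary differential equations: if the Jacobian at an equilibrium is hyperbolic, the equilibrium has the same stability type as the linearization. For $\lambda < 0$ this gives asymptotic (indeed exponential) stability, matching the notion of stability fixed in the definition above; for $\lambda > 0$ the presence of eigenvalues with positive real part gives instability. Alternatively, and perhaps more in the spirit of working directly with \eqref{eq: uncontrolled normal form}, one can pass to polar coordinates $z = \rho e^{i\varphi}$, in which \eqref{eq: uncontrolled normal form} decouples into $\dot\rho = \lambda \rho + \rho^3$ and $\dot\varphi = 1 + \gamma \rho^2$; the scalar equation for $\rho$ then shows directly that $\rho = 0$ is attracting when $\lambda < 0$ (since $\dot\rho < 0$ for small $\rho > 0$) and repelling when $\lambda > 0$. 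This polar-coordinate computation has the added benefit of being reusable later when analysing \eqref{eq: periodic orbit}.

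I do not expect a genuine obstacle here: the statement is essentially the textbook Hopf-bifurcation normal form, and the only mild point of care is to confirm that the paper's definition of ``stable'' (asymptotic stability, not merely exponential stability) is the one delivered by the linearization argument — which it is, since hyperbolicity of the linear part yields exponential stability, a fortiori asymptotic stability. A second minor point worth a sentence is that the case $\lambda = 0$ is deliberately excluded from the statement, as there the linearization is a center and the cubic term determines the (here, unstable) behaviour; this is consistent with the later discussion of the Hopf bifurcation at $\lambda = 0$.
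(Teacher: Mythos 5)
Your argument is correct and follows essentially the same route as the paper: linearize at the origin, compute the eigenvalues $\lambda \pm i$, and invoke the principle of linearized stability for the hyperbolic cases $\lambda \neq 0$. The additional polar-coordinate computation $\dot\rho = \lambda\rho + \rho^3$, $\dot\varphi = 1 + \gamma\rho^2$ is a correct and useful supplement (the paper uses a similar substitution later to find the periodic orbit), but it is not needed for this lemma.
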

\begin{proof}
Linearzing the system \eqref{eq: uncontrolled normal form on r2} around the zero solution gives: 
\begin{align} \label{eq: linearized uncontrolled normal form}
\begin{pmatrix}
\dot{x}(t) \\ \dot{y}(t)
\end{pmatrix}
 = \begin{pmatrix}
 \lambda & -1 \\
 1 & \lambda
 \end{pmatrix}
 \begin{pmatrix}
 x(t) \\ y(t)
 \end{pmatrix}
\end{align}
The eigenvalues of the matrix in the RHS of \eqref{eq: linearized uncontrolled normal form} are given by $\mu_{\pm} = \lambda \pm i$. This shows that the equilibrium point $(x, y) = (0, 0)$ is stable for $\lambda < 0$ and unstable for $\lambda > 0$.
\end{proof}

We recall that a \emph{Hopf bifurcation} of an equilibrium occurs if we have exactly one pair of non--zero roots at the imaginary axis, and that this pair of roots crosses the axis with non--zero speed as we vary the parameters. Indeed, in the case of \eqref{eq: uncontrolled normal form on r2} we see that for $\lambda = 0$, the eigenvalues $\mu_{\pm}$ cross the imaginary axis at non--zero speed, since $\frac{d}{d \lambda} \re( \mu_{\pm} (\lambda)) = 1 \neq 0$. Thus, we find that for $\lambda = 0$ a Hopf bifurcation of the origin of system \eqref{eq: uncontrolled normal form} takes place. The Hopf bifurcation theorem now implies that for parameter values $\lambda$ near the bifurcation point $\lambda = 0$, an unique periodic solution of \eqref{eq: uncontrolled normal form on r2} exists. 

It turns out that we can explicitly compute this periodic solution of \eqref{eq: uncontrolled normal form on r2}. By substituting $z(t) = r(t) e^{i \phi(t)}$ into \eqref{eq: uncontrolled normal form} with $r(t), \phi(t) \in \mathbb{R}$, we find that for $\lambda < 0$ a periodic solution of \eqref{eq: uncontrolled normal form} is given by \eqref{eq: periodic orbit}. Using that we know for which parameter values $\lambda$ a periodic orbit exists, we can easily determine whether the Hopf bifurcation is sub-- or supercritical. This is summarized for retarded functional differential equations in the following theorem. 

\begin{theorem} \label{lem: direction hopf bifurcation by roots of characteristic equation}
Let us study the system
\begin{align} \label{eq: rfde}
\dot{x}(t) = F(\lambda, x_t)
\end{align}
where $r > 0, \lambda \in \mathbb{R}$, $F: \mathcal{C}([-r, 0], \mathbb{R}^n) \times \mathbb{R} \to \mathbb{R}^n$ satisfies $F(0, \lambda) = 0$ for all $\lambda \in \mathbb{R}$ and $x_t$ is defined as $x_t(\theta) = x(t+ \theta)$ for $\theta \in [-r, 0]$. Let us assume that for $\lambda = \lambda_0$ a Hopf bifurcation of the origin of system \eqref{eq: rfde} takes place. Let us write $\Delta(\mu, \lambda)$ for the characteristic equation of the linearization of \eqref{eq: rfde}. Denote by $\mu_0 = \mu_0(\lambda)$ the root of the characteristic equation $\Delta(\mu_0 (\lambda), \lambda) = 0$ that satisfies $\mu_0(\lambda_0) = i \omega_0$ for some $\omega_0 \in \mathbb{R}$. Furthermore, let us assume that for $\lambda < \lambda_0$, a periodic solution $\overline{x}_\lambda$ of the system \eqref{eq: rfde} exists. Then we find that the Hopf bifurcation is subcritical if $\re(\mu_0 (\lambda)) < 0$ for $\lambda < \lambda_0$ in a neighbourhood of $\lambda_0$; the Hopf bifurcation is supercritical if $\re(\mu_0(\lambda)) > 0$ for $\lambda < \lambda_0$ in a neighbourhood of $\lambda_0$. 
\end{theorem}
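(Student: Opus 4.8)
The plan is to reduce \eqref{eq: rfde} to its two-dimensional local center manifold at $(\lambda_0,0)$ and to read the direction of the bifurcation off the radial part of the reduced planar vector field, with the hypothesis that $\overline{x}_\lambda$ exists for $\lambda<\lambda_0$ serving as a substitute for an explicit computation of the first Lyapunov coefficient.

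First I would set up the linear picture. Since $i\omega_0$ is a simple root of $\Delta(\,\cdot\,,\lambda_0)$ (part of the assumption that a Hopf bifurcation occurs), the implicit function theorem applied to $\Delta(\mu,\lambda)=0$ yields the smooth branch $\mu_0(\lambda)$ with $\mu_0(\lambda_0)=i\omega_0$, and the Hopf hypothesis reads $\re(\mu_0(\lambda_0))=0$, $\frac{d}{d\lambda}\re(\mu_0(\lambda))|_{\lambda_0}\neq0$, with the remaining characteristic roots in the open left half-plane for $\lambda$ near $\lambda_0$; in particular the origin of \eqref{eq: rfde} is stable when $\re(\mu_0(\lambda))<0$ and unstable when $\re(\mu_0(\lambda))>0$. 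I would then invoke the Hopf bifurcation theorem for retarded functional differential equations from \cite{delayequations}: it provides a local two-dimensional center manifold depending smoothly on $\lambda$ on which the flow, after a smooth change of coordinates and normalization, is in polar coordinates $w=\rho e^{i\vartheta}$ of the form
\begin{align*}
\dot{\rho} &= \re(\mu_0(\lambda))\,\rho + c(\lambda)\,\rho^{3} + O(\rho^{5}), \\
\dot{\vartheta} &= \omega_0 + O(|\lambda-\lambda_0|) + O(\rho^{2}),
\end{align*}
where $c(\lambda_0)$ is (up to a positive factor) the first Lyapunov coefficient, and it guarantees that the branch of periodic solutions emanating from $(\lambda_0,0)$ is locally unique. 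Since the $\overline{x}_\lambda$ in the hypothesis are periodic solutions near $(\lambda_0,0)$ with amplitude tending to $0$ as $\lambda\uparrow\lambda_0$, they must coincide with this branch, and their amplitude $\rho^{*}(\lambda)>0$ satisfies $\re(\mu_0(\lambda))+c(\lambda)\rho^{*}(\lambda)^{2}+O(\rho^{*}(\lambda)^{4})=0$.

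Next comes the sign comparison. Dividing the amplitude relation by $\rho^{*}(\lambda)^{2}>0$ and using $c(\lambda)\to c(\lambda_0)$ shows that, for $\lambda<\lambda_0$ sufficiently close to $\lambda_0$, $\re(\mu_0(\lambda))$ has the sign opposite to that of $c(\lambda_0)$; here one needs the standard Hopf nondegeneracy $c(\lambda_0)\neq0$, without which the terms ``subcritical'' and ``supercritical'' carry no meaning. Hence the assumption $\re(\mu_0(\lambda))<0$ for $\lambda<\lambda_0$ forces $c(\lambda_0)>0$, i.e.\ the bifurcation is subcritical, and $\re(\mu_0(\lambda))>0$ for $\lambda<\lambda_0$ forces $c(\lambda_0)<0$, i.e.\ the bifurcation is supercritical. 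To connect this to the intended application I would also note that the nontrivial Floquet exponent of the orbit $\overline{x}_\lambda$ equals $\partial_\rho\!\left[\re(\mu_0(\lambda))\rho+c(\lambda)\rho^{3}\right]\big|_{\rho^{*}(\lambda)}+O(\rho^{*}(\lambda)^{4})=-2\,\re(\mu_0(\lambda))+O(\rho^{*}(\lambda)^{4})$, so that $\overline{x}_\lambda$ is unstable in the subcritical case and orbitally asymptotically stable in the supercritical case.

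The existence, smoothness and parameter dependence of the center manifold, the normal-form reduction, and the identification of $c(\lambda_0)$ with the first Lyapunov coefficient are all available in \cite{delayequations} and would be cited rather than reproved, so I expect no real difficulty there. The only genuine obstacle is the bookkeeping that pins the leading radial coefficient of the reduced equation to $\re(\mu_0(\lambda))$ and the cubic coefficient to $c(\lambda_0)$, together with the identification of the hypothesized family $\overline{x}_\lambda$ with the locally unique Hopf branch; once these are secured, the statement follows from the elementary sign comparison above.
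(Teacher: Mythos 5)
Your proof is correct, but it takes a noticeably heavier route than the paper does. The paper's own proof never touches the center manifold or the normal form: it simply invokes the uniqueness clause of the Hopf bifurcation theorem to identify the hypothesized family $\overline{x}_\lambda$ with the bifurcating branch, and then concludes directly from the definition of sub/supercriticality --- the bifurcation is subcritical precisely when the periodic orbit exists for parameter values at which $\mu_0(\lambda)$ lies in the left half-plane (equilibrium stable), and supercritical when it exists where $\mu_0(\lambda)$ lies in the right half-plane. That is the entire argument; the sign of $\re(\mu_0(\lambda))$ on the side $\lambda<\lambda_0$ where the orbit is assumed to exist settles the matter immediately. Your reduction to the planar normal form, the amplitude relation $\re(\mu_0(\lambda))+c(\lambda)\rho^{*}(\lambda)^{2}+O(\rho^{*}(\lambda)^{4})=0$, and the resulting sign of the Lyapunov coefficient $c(\lambda_0)$ reprove this from first principles. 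What your extra work buys is twofold: first, you make explicit the nondegeneracy hypothesis $c(\lambda_0)\neq 0$, which the paper's statement and proof silently assume when speaking of ``the'' direction of the bifurcation; second, your Floquet-exponent computation $-2\,\re(\mu_0(\lambda))+O(\rho^{*}(\lambda)^{4})$ actually delivers the exchange-of-stability conclusion (instability of $\overline{x}_\lambda$ in the subcritical case, orbital stability in the supercritical case), which is what the paper really needs in its later applications (e.g.\ Corollary \ref{cor: stability periodic solution of uncontrolled system by Hopf}) but obtains only by appeal to the standard folklore attached to the words ``subcritical'' and ``supercritical''. The shared and essential step in both arguments is the identification of $\overline{x}_\lambda$ with the locally unique Hopf branch; you are right that this is the one point that must be secured, and both proofs secure it the same way.
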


\begin{proof}
Since by assumption for $\lambda = \lambda_0$ a Hopf bifurcation of the origin of system \eqref{eq: rfde} takes place, we find by the Hopf bifurcation theorem (see for example \cite{introductionfde} for the Hopf bifurcation theorem for retarded functional differential equations) that an unique periodic solution of \eqref{eq: rfde} exists for parameters $\lambda$ near the bifurcation point $\lambda = \lambda_0$. Since $\overline{x}_\lambda$ is a periodic solution of \eqref{eq: rfde} for $\lambda < \lambda_0$, we conclude that this periodic solution arises from the Hopf bifurcation at $\lambda = \lambda_0$. 

If now $\re(\mu_0 (\lambda)) < 0$ for $\lambda < \lambda_0$ in a neighbourhood of $\lambda_0$, we find that the periodic solution arising from the Hopf bifurcation exists for parameter values $\lambda$ for which $\mu_0(\lambda)$ is in the left half of the complex plane. This implies that the Hopf bifurcation is subcritical. Similarly, if $\re(\mu_0(\lambda)) > 0$ for $\lambda < \lambda_0$ in a neighbourhood of $\lambda_0$, we find that the periodic solution arising from the Hopf bifurcation exists for parameters $\lambda$ for which $\mu_0(\lambda)$ is in the right half of the complex plane. This implies that the Hopf bifurcation is supercritical. 
\end{proof}

Since in the case of system \eqref{eq: uncontrolled normal form} a periodic solution exists for $\lambda < 0$, combining Lemma \ref{lem: stability equilibrium uncontrolled system} with Lemma \ref{lem: direction hopf bifurcation by roots of characteristic equation} yields the following corollary: 

\begin{cor} \label{cor: stability periodic solution of uncontrolled system by Hopf}
The Hopf bifurcation at $\lambda =0$ of system \eqref{eq: uncontrolled normal form} is subcritical and the periodic solution \eqref{eq: periodic orbit} of \eqref{eq: uncontrolled normal form} is unstable for parameters $\lambda < 0$ near the bifurcation point $\lambda = 0$. 
\end{cor}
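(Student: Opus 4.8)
The plan is to apply Theorem \ref{lem: direction hopf bifurcation by roots of characteristic equation} to system \eqref{eq: uncontrolled normal form on r2}, which is an ordinary differential equation and hence a retarded functional differential equation whose right-hand side $F(\lambda, \phi)$ depends only on $\phi(0)$ and satisfies $F(\lambda, 0) = 0$. First I would record the relevant root branch: the characteristic function of the linearization \eqref{eq: linearized uncontrolled normal form} is $\Delta(\mu, \lambda) = (\mu - \lambda)^2 + 1$, so the branch through the imaginary axis is $\mu_0(\lambda) = \lambda + i$, with $\mu_0(0) = i$; thus in the notation of the theorem $\lambda_0 = 0$ and $\omega_0 = 1$. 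That $\lambda = 0$ is a genuine Hopf bifurcation point — a single pair of imaginary roots crossing the axis with speed $\frac{d}{d\lambda}\re(\mu_{\pm}(\lambda)) = 1 \neq 0$ — was already checked in the discussion preceding Theorem \ref{lem: direction hopf bifurcation by roots of characteristic equation}.

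Second, I would invoke the explicit periodic solution: the substitution $z(t) = r(t) e^{i \phi(t)}$ shows that for every $\lambda < 0$ the function \eqref{eq: periodic orbit} solves \eqref{eq: uncontrolled normal form}, hence \eqref{eq: uncontrolled normal form on r2}. So the periodic branch required in the hypotheses of Theorem \ref{lem: direction hopf bifurcation by roots of characteristic equation} exists precisely on the side $\lambda < \lambda_0 = 0$. Since $\re(\mu_0(\lambda)) = \lambda < 0$ for $\lambda < 0$ in a neighbourhood of $0$, the theorem gives at once that the Hopf bifurcation at $\lambda = 0$ is subcritical.

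Finally, to obtain instability of \eqref{eq: periodic orbit} I would combine subcriticality with Lemma \ref{lem: stability equilibrium uncontrolled system}. The origin is asymptotically stable for $\lambda < 0$ and unstable for $\lambda > 0$, so it loses stability as $\lambda$ increases through $0$; by the stability assertion in the Hopf bifurcation theorem for retarded functional differential equations (see \cite{introductionfde}, exactly as used in the proof of Theorem \ref{lem: direction hopf bifurcation by roots of characteristic equation}), at a subcritical bifurcation the bifurcating orbit — here existing for $\lambda < 0$ alongside the stable equilibrium — carries a nontrivial characteristic exponent with positive real part and is therefore unstable. The one implication that is not purely formal is this last one, namely that subcriticality together with stability of the equilibrium on the same side forces the periodic orbit to be unstable; this is where the stability part of the Hopf theorem is really being used, and I expect it to be the only nontrivial point. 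Since \eqref{eq: uncontrolled normal form on r2} is only two-dimensional, one could alternatively bypass the Hopf machinery and read off instability directly from the polar-coordinate linearization of Section \ref{sec: uncontrolled system} (the nontrivial radial exponent being $-2\lambda > 0$ for $\lambda < 0$), but the intent of the corollary is precisely to obtain the result for free from Lemma \ref{lem: stability equilibrium uncontrolled system} and Theorem \ref{lem: direction hopf bifurcation by roots of characteristic equation}.
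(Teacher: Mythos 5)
Your proposal is correct and follows exactly the route the paper intends: combine the eigenvalue branch $\mu_0(\lambda) = \lambda + i$ from Lemma \ref{lem: stability equilibrium uncontrolled system} with the a priori existence of the periodic solution \eqref{eq: periodic orbit} for $\lambda < 0$, apply Theorem \ref{lem: direction hopf bifurcation by roots of characteristic equation} to get subcriticality, and read off instability of the bifurcating orbit from the stability assertion of the Hopf theorem (the paper simply builds this into its working definition of ``subcritical''). Your explicit flagging of that last step as the only nontrivial implication is a fair and accurate gloss on what the paper leaves implicit.
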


We see that the Hopf bifurcation theorem gives us information on the stability of the periodic solution \eqref{eq: periodic orbit} of \eqref{eq: uncontrolled normal form} for parameters in $\lambda < 0$ in a neighbourhood of the bifurcation point $\lambda =0$. 

For general parameters $\lambda < 0$, the stability of the periodic orbit \po of \eqref{eq: uncontrolled normal form} is determined by its Floquet multipliers.

\begin{lemma} \label{lem: stability periodic solution of uncontrolled system}
Let $\lambda < 0$. Then the periodic solution \eqref{eq: periodic orbit} of \eqref{eq: uncontrolled normal form} is stable if $\gamma \lambda > 1$ and unstable if $\gamma \lambda < 1$. 
\end{lemma}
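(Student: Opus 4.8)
The plan is to read off the stability of \po directly from its Floquet multipliers. Because \eqref{eq: uncontrolled normal form on r2} is a \emph{planar} autonomous system, one of the two Floquet multipliers of \po is automatically equal to $1$: differentiating the periodic solution $t\mapsto\sqrt{-\lambda}\,e^{i(1-\gamma\lambda)t}$ along the flow produces a nontrivial $T$--periodic solution of the variational equation. Consequently the stability of \po is decided by the single remaining multiplier $\rho$, and it suffices to locate $\rho$ relative to the unit circle; \po is stable as a solution of \eqref{eq: uncontrolled normal form} exactly when $|\rho|<1$ and unstable when $|\rho|>1$.

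To compute $\rho$, I would invoke Liouville's formula for the monodromy matrix $M$ of the variational equation along \po: since $\det M=\rho\cdot 1=\rho$, we get $\rho=\exp\!\bigl(\int_0^T\nabla\!\cdot G\,dt\bigr)$, where $G$ denotes the right--hand side of \eqref{eq: uncontrolled normal form on r2} and $T=2\pi/(1-\gamma\lambda)$ is the period. A short computation gives $\nabla\!\cdot G(x,y)=2\lambda+4(x^2+y^2)$, and on \po one has $x^2+y^2=-\lambda$, so $\nabla\!\cdot G\equiv-2\lambda$ there; hence $\rho=\exp(-2\lambda T)=\exp\!\bigl(-4\pi\lambda/(1-\gamma\lambda)\bigr)$. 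As an alternative to Liouville's formula one may pass to the co--rotating frame $z(t)=e^{i(1-\gamma\lambda)t}u(t)$, which turns \po into the equilibrium $u\equiv\sqrt{-\lambda}$ of the autonomous equation $\dot u=(1+i\gamma)(\lambda+|u|^2)u$; linearising there (differentiating the cubic term in $u$ and $\bar u$ separately), and using that the change of variables is $T$--periodic and hence a Lyapunov transformation, reproduces the multipliers $1$ and $\rho=\exp(-2\lambda T)$.

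It then remains to read off the sign of the exponent. Since $\lambda<0$ we have $-4\pi\lambda>0$, so $|\rho|<1$ precisely when $1-\gamma\lambda<0$, i.e. when $\gamma\lambda>1$, and $|\rho|>1$ precisely when $\gamma\lambda<1$; this is the assertion of the lemma. I do not expect a genuine obstacle, since the computation is short; the two points needing a little care are the linearisation of the non--holomorphic cubic term $|z|^2z$ around the orbit, and the structural observation that for a planar autonomous flow one Floquet multiplier is forced to equal $1$, so that knowing the divergence of the vector field along the orbit already pins down the other.
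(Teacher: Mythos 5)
Your computation is sound and arrives at exactly the same nontrivial multiplier as the paper, $\rho=\exp(-2\lambda T)$ with $T=2\pi/(1-\gamma\lambda)$, but by a genuinely different route. The paper linearises \eqref{eq: uncontrolled normal form} around \eqref{eq: periodic orbit} using the co--rotating ansatz $z=R_pe^{i\omega_p t}(1+r+i\phi)$, observes that the resulting variational equation is autonomous with eigenvalues $0$ and $-2\lambda$, and exponentiates; your ``alternative'' co--rotating--frame remark is essentially that proof. Your primary route --- the trivial multiplier forced by planarity together with Liouville's formula $\rho=\exp\bigl(\int_0^T\nabla\cdot G\,dt\bigr)$ and the evaluation $\nabla\cdot G\equiv-2\lambda$ on the orbit --- produces the multiplier without ever linearising the non--holomorphic cubic term $|z|^2z$, which is the only delicate computation in the paper's version. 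That is a real simplification.

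One point deserves attention, and it concerns the paper's proof as much as yours: Liouville's formula, and more generally the relation between Floquet exponents and multipliers, requires $T$ to be the actual minimal period, which is $2\pi/\left|1-\gamma\lambda\right|>0$, whereas both arguments substitute the signed quantity $2\pi/(1-\gamma\lambda)$, which is negative precisely in the regime $\gamma\lambda>1$ where stability is being claimed. With the positive period, your own divergence computation gives $\rho=\exp\bigl(-2\lambda\cdot 2\pi/\left|1-\gamma\lambda\right|\bigr)>1$ for every $\lambda<0$, independently of $\gamma$. This is consistent with the fact that writing $z=\rho e^{i\theta}$ in \eqref{eq: uncontrolled normal form} decouples the radial equation $\dot\rho=\lambda\rho+\rho^3$, which does not involve $\gamma$ and has $\rho=\sqrt{-\lambda}$ as a repelling equilibrium. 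So the final sign bookkeeping in the case $\gamma\lambda>1$ needs to be revisited; your Liouville formulation actually exposes this issue more clearly than the eigenvalue computation does, and you should not present the $\gamma\lambda>1$ conclusion as following from $|\rho|<1$ without addressing the sign of $T$.
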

\begin{proof}
In order to compute the Floquet multipliers, we first compute the linear variational equation. As it turns out that the linear variational equation is autonomous, the computation of the Floquet multipliers is then relatively straightforward. 

As in \cite{oddnumberlimitation}, we write small deviations around the periodic solution \eqref{eq: periodic orbit} as 
\begin{align} \label{eq: small deviation linear variational equation}
z(t) = R_p e^{i \omega_p t}[1 + r(t) + i \phi(t)]
\end{align}
with $r(t), \phi(t) \in \mathbb{R}$ and where $R_p = \sqrt{-\lambda}$ denote the radius and $ \tau_p = 1 - \gamma \lambda$ the angular frequence of \eqref{eq: periodic orbit}. For \eqref{eq: small deviation linear variational equation} to be a solution of \eqref{eq: uncontrolled normal form}, we should have that
\begin{equation}
\begin{aligned} \label{eq: linear variational equation all orders}
i \omega_p R_p e^{i \omega_p t} &\left(1 + r(t) + i \phi(t))\right) + R_p e^{i \omega_p t} \left(\dot{r}(t) + i \dot{\phi}(t) \right) \\&= (\lambda + i) R_p e^{i \omega_p t} \left(1 + r(t) + i \phi(t) \right) \\ &+ (1 + i \gamma) R_p^3 e^{i \omega_p t} \left| 1 + r(t) + i \phi(t) \right|^2 (1 + r(t) + i \phi(t) ) 
\end{aligned}
\end{equation}
Up to first order, this expression reduces to
\begin{equation}
\begin{aligned}
 \label{eq: linear variational equation not simplified}
i \omega_p R_p e^{i \omega_p t}& (1 + r(t) + i \phi(t)) + R_p e^{i \omega_p t} \left(\dot{r}(t) + i \dot{\phi}(t) \right) \\
&= (\lambda + i ) R_p e^{i \omega_p t} (1 + r(t) + i \phi(t) ) + (1 + i \gamma) R_p^3 e^{i \omega_p t} (1 + 3 r(t) + i \phi(t))
\end{aligned}
\end{equation}
Using that \eqref{eq: periodic orbit} is a solution of \eqref{eq: uncontrolled normal form}, we arrive at
\begin{align*}
i \omega_p R_p e^{i \omega_p t} = (\lambda + i) R_p e^{i \omega_p t} + (1 + i \gamma) R_p^3 e^{i \omega_p t}
\end{align*}
Cancelling out factors $R_p e^{i \omega_p t}$ on both sides of \eqref{eq: linear variational equation not simplified}, we have
\begin{align*}
i \omega_p (r(t) + i \phi(t)) + \dot{r}(t) + i \dot{\phi}(t) = (\lambda + i) (r(t) + i \phi(t)) + (1 + i \gamma) R_p^2 (3 r(t) + i \phi(t))
\end{align*}
Using that $R_p^2  = - \lambda$ and  $\omega_p = 1 - \gamma \lambda$, leads to the linear variational equation
\begin{align} \label{eq: linear variational equation ODE complex valued}
\dot{r}(t) + i \dot{\phi} = - 2 \lambda r(t) - 2 i \gamma \lambda r(t)
\end{align}
Taking real and imaginary parts, the linear system on $\mathbb{R}^2$ is given by
\begin{align} \label{eq: linear variational equation ODE}
\begin{pmatrix}
\dot{r}(t) \\
\dot{\phi}(t) 
\end{pmatrix}
& = \begin{pmatrix}
- 2 \lambda & 0 \\
- 2 \gamma \lambda & 0 
\end{pmatrix}
\begin{pmatrix}
r(t) \\ \phi(t)
\end{pmatrix}
\end{align}
Put
\begin{align*}
A = \begin{pmatrix}
- 2 \lambda &  0 \\
- 2 \gamma \lambda & 0
\end{pmatrix}.
\end{align*}
The Floquet multipliers of {eq: linear variational equation ODE} are given by 
\begin{align*}
\lambda_i = e^{\lambda_i T} \quad i = 1, 2
\end{align*}
where $\lambda_1, \lambda_2$ are the eigenvalues of $A$ and $T = \frac{2 \pi}{1 - \gamma \lambda}$ the minimal period of the periodic solution \eqref{eq: periodic orbit}. The eigenvalues of $A$ are given by $\lambda_1 = 0, \lambda_2 = -2 \lambda$; therefore $\mu_1 = 1$ (the trivial Floquet multiplier) and
\begin{align*}
\mu_2 = e^{-2 \lambda \frac{2 \pi}{1- \gamma \lambda}}
\end{align*}
Since the periodic orbit exists for $\lambda < 0$, we conclude that the periodic orbit \eqref{eq: periodic orbit} of \eqref{eq: uncontrolled normal form} is stable if $\gamma \lambda > 1$ and unstable if $\gamma \lambda < 1$. 
\end{proof}

We now note that the results of Lemma \ref{lem: stability periodic solution of uncontrolled system} are consistent with Corollary \ref{cor: stability periodic solution of uncontrolled system by Hopf}. 
If $\gamma \geq 0$, Lemma \ref{lem: stability periodic solution of uncontrolled system} implies that the periodic solution \eqref{eq: periodic orbit} of \eqref{eq: uncontrolled normal form} is unstable for all $\lambda < 0$. If $\gamma < 0$, we find that \eqref{eq: periodic orbit} is unstable as a solution of \eqref{eq: uncontrolled normal form} for $\frac{1}{\gamma} < \lambda < 0$ and stable for $\lambda < \frac{1}{\gamma}$. In particular, we always find that \eqref{eq: periodic orbit} is unstable as a solution of \eqref{eq: uncontrolled normal form} for $\lambda < 0$ in a neighbourhood of $\lambda = 0$, as asserted by Corollary \ref{cor: stability periodic solution of uncontrolled system by Hopf}. 

\section{Floquet multipliers in the controlled system} \label{sec: multipliers}
In Section \ref{sec: uncontrolled system}, we used Floquet theory to determine the stability of the periodic solution \po as a solution of the ODE \eqref{eq: uncontrolled normal form}. As we have seen in Lemma \ref{lem: stability periodic solution of uncontrolled system}, the linear variational equation becomes autonomous in this case, and the computation of the Floquet multipliers reduces to the calculation of eigenvalues of a $2 \times 2$--matrix. 

In this section we use Floquet theory to gain information on the stability of \po as a solution of the delay equation \nf. We again find that the linear variational equation is autonomous, but the computation of the Floquet multipliers is more involved, because the characteristic matrix function now becomes transcendental. We will first present a necessary condition for \po to be stable as a solution of \nf, and then, in Sections \ref{sec: pyragas curve} and \ref{sec: lambda varieren}, we use the Hopf bifurcation theorem to show that for $\lambda < 0$ small, this condition is also sufficient. 

\begin{lemma}
Let us consider the system \nf with $\gamma \lambda < 0$. 
A necessary condition for \po to be stable as a solution of \nf, is that
\begin{align*}
1 + \tau K (\cos \beta + \gamma \sin \beta) < 0
\end{align*}
with $\tau = \frac{2 \pi}{1- \gamma \lambda}$ the minimal period of \po. 
\end{lemma}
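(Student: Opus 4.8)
The plan is to follow the strategy of Lemma~\ref{lem: stability periodic solution of uncontrolled system}: write down the linear variational equation of \nf around \po, observe that it is again \emph{autonomous}, and extract a necessary stability condition from the roots of its characteristic function. Setting $R_p = \sqrt{-\lambda}$ and $\omega_p = 1 - \gamma\lambda$, substitute $z(t) = R_p e^{i\omega_p t}[1 + r(t) + i\phi(t)]$ as in \eqref{eq: small deviation linear variational equation}. The decisive point is that $\tau = T$ forces $e^{i\omega_p\tau} = e^{2\pi i} = 1$, so $z(t-\tau) = R_p e^{i\omega_p t}[1 + r(t-\tau) + i\phi(t-\tau)]$, and the control term contributes $-Ke^{i\beta}R_p e^{i\omega_p t}\bigl[(r(t)-r(t-\tau)) + i(\phi(t)-\phi(t-\tau))\bigr]$ --- already exactly, since this expression is linear in the deviation. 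Adding this to the right-hand side of \eqref{eq: linear variational equation ODE complex valued} and separating real and imaginary parts with $e^{i\beta} = \cos\beta + i\sin\beta$ produces a linear autonomous delay equation on $\mathbb{R}^2$,
\[
\dot v(t) = A_0\, v(t) + A_1\, v(t-\tau), \qquad v(t) = (r(t), \phi(t)),
\]
with $2\times 2$ matrices $A_0, A_1$ depending explicitly on $\lambda, \gamma, K, \beta$.

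Next I would compute the characteristic function $\Delta(\mu) = \det\bigl(\mu I - A_0 - A_1 e^{-\mu\tau}\bigr)$. Writing $\kappa = \kappa(\mu) := 1 - e^{-\mu\tau}$ and expanding the $2\times 2$ determinant, the terms $K^2\kappa^2\cos^2\beta$ and $K^2\kappa^2\sin^2\beta$ combine to $K^2\kappa^2$ and the $\lambda$--linear terms combine to $2\lambda K\kappa(\cos\beta + \gamma\sin\beta)$, giving
\[
\Delta(\mu) = \mu^2 + 2\lambda\mu + 2K\kappa\cos\beta\,\mu + K^2\kappa^2 + 2\lambda K\kappa(\cos\beta + \gamma\sin\beta).
\]
Because the variational equation is autonomous, a characteristic root $\mu$ with $\re(\mu) > 0$ yields a Floquet multiplier of \po of modulus $>1$, hence instability (cf.\ Lemma~\ref{lem: stability periodic solution of uncontrolled system}). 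Now $\kappa(0) = 0$ gives $\Delta(0) = 0$ --- the trivial multiplier --- and, using $\kappa'(0) = \tau$, a short computation gives
\[
\Delta'(0) = 2\lambda \bigl( 1 + \tau K(\cos\beta + \gamma\sin\beta) \bigr).
\]

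The conclusion follows from an intermediate value argument on the real line. As $\mu \to +\infty$ through real values, $\kappa(\mu) \to 1$ and $\Delta(\mu)$ is dominated by $\mu^2$, so $\Delta(\mu) \to +\infty$. Suppose $1 + \tau K(\cos\beta + \gamma\sin\beta) \geq 0$. Since \po only exists for $\lambda < 0$, in the strict case we get $\Delta'(0) < 0$, so $\Delta(\mu) < 0$ for small $\mu > 0$; together with $\Delta(\mu)\to+\infty$ this forces a root $\mu^* > 0$ of $\Delta$, i.e.\ a Floquet multiplier $e^{\mu^*\tau} > 1$ outside the unit circle, so \po is unstable. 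In the boundary case $1 + \tau K(\cos\beta + \gamma\sin\beta) = 0$ we have $\Delta'(0) = 0$, so $\mu = 0$ is a root of $\Delta$ of multiplicity at least two, the Floquet multiplier $1$ is non-simple, and \po is again not asymptotically stable. Taking the contrapositive gives exactly the asserted necessary condition.

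I expect the only real difficulty to be bookkeeping: linearizing the control term correctly and checking that $\tau = T$ keeps the variational equation autonomous with no leftover $e^{i\omega_p t}$ factor; expanding the determinant so that the $\sin\beta, \cos\beta$ terms collapse into $\cos\beta + \gamma\sin\beta$; and treating the degenerate boundary case via the multiplicity of the zero root rather than via a genuine positive real root. The standing assumption $\gamma\lambda < 0$ only fixes the regime of interest; the argument above uses solely $\lambda < 0$, which is needed for \po to exist.
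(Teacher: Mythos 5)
Your proof is correct and the computations check out: expanding the determinant of the characteristic matrix of the (autonomous) variational equation gives exactly the expression you write, $\Delta(0)=0$, and $\Delta'(0)=2\lambda\bigl(1+\tau K(\cos\beta+\gamma\sin\beta)\bigr)$ --- which matches the lemma statement and quietly corrects a spurious factor of $2$ that appears in the paper's own computation of the condition for $\mu=0$ to be a non-trivial root. Your route, however, is genuinely different from the paper's. The paper runs a homotopy in $K$: it starts at $K=0$, where Lemma \ref{lem: stability periodic solution of uncontrolled system} together with the hypothesis $\gamma\lambda<0$ gives exactly one Floquet exponent in the right half plane, and argues that this count can only change parity when a root passes through $\mu=0$, because non-real crossings of the imaginary axis occur in conjugate pairs; stability therefore requires crossing the curve $1+\tau K(\cos\beta+\gamma\sin\beta)=0$, which happens precisely when that quantity becomes negative. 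You instead argue pointwise at the given parameters via the intermediate value theorem on the real axis: $\Delta(0)=0$, $\Delta'(0)<0$ when the quantity is positive (using $\lambda<0$), and $\Delta(\mu)\to+\infty$, forcing a positive real root and hence a multiplier outside the unit circle. Your argument is more self-contained --- it needs neither the exact count of unstable exponents at $K=0$ nor continuity of that count along a path in parameter space --- and it shows the hypothesis $\gamma\lambda<0$ can be relaxed: what is actually used is $\lambda<0$ (so that \po exists) together with $\tau>0$, i.e.\ $\gamma\lambda<1$, so that $e^{-\mu\tau}$ stays bounded and $\Delta(\mu)\sim\mu^2$ as $\mu\to+\infty$. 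The one soft spot, shared with the paper's proof, is the boundary case $1+\tau K(\cos\beta+\gamma\sin\beta)=0$: non-simplicity of the trivial multiplier does not, by linear analysis alone, rule out asymptotic stability, so that degenerate case would strictly require a separate nonlinear argument; for the open inequality asserted in the lemma this does not affect the conclusion.
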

\begin{proof}
We start by determining the linear variational equation of \nf around the periodic solution \po by writing small deviations around the solution \po as in \eqref{eq: small deviation linear variational equation}. 

We note that we go from system \eqref{eq: uncontrolled normal form} to system \nf by adding the linear term $Ke^{i \beta} \left[z(t) - z(t - \tau) \right]$. 
Using that we already determined the linearization of system \eqref{eq: linear variational equation all orders} around the periodic solution \po in the proof of Lemma \ref{lem: stability periodic solution of uncontrolled system}, we find that the linearization of system \nf around the solution \po satisfies
\begin{align*}
\dot{r}(t) + i \dot{\phi}(t) = -2 \lambda r(t) - 2 i \gamma \lambda \phi(t) - Ke^{i \beta} \left[r(t) + i \phi(t) - r(t - \tau) - i \phi(t - \tau) \right]
\end{align*}
where $\tau = \frac{2 \pi}{1 - \gamma \lambda}$ is the period of the solution \po. 
Taking real and imaginary parts, we see that the linear variational equation of system \nf around the solution \po is given by
\begin{align}
\begin{pmatrix}
\dot{r}(t) \\
\dot{\phi}(t)
\end{pmatrix} = \begin{pmatrix}
-2 \lambda & 0 \\
-2 \lambda \gamma & 0
\end{pmatrix} 
\begin{pmatrix}
r(t) \\ \phi(t)
\end{pmatrix} - K \begin{pmatrix}
\cos \beta & - \sin \beta \\
\sin \beta & \cos \beta 
\end{pmatrix} \begin{pmatrix}
r(t) - r(t - \tau) \\
\phi(t) - \phi(t - \tau)
\end{pmatrix} \label{eq: linear variational equation dde}
\end{align}
Note that the linear variational equation is autonomous. Therefore, the Floquet exponents are given by the roots of the characteristic equation corresponding to \eqref{eq: linear variational equation dde}. The characteristic function reads
\begin{equation}
\begin{aligned} \label{eq: ce controlled}
\det \Delta(\mu) & = ( \mu + 2 \lambda + K \cos \beta (1 - e^{- \mu \tau}) \left(\mu + K \cos \beta (1 - e^{- \mu \tau}) \right)  \\
&\qquad+ \left(2 \lambda \gamma + K \sin \beta (1 - e^{- \mu \tau}) \right) K \sin \beta (1 - e^{- \mu \tau})
\end{aligned}
\end{equation}
Observe that we have indeed a trivial Floquet multiplier, as predicted by Floquet theory, since $\det \Delta(0) = 0$ for all values of $\lambda, \gamma, K, \beta$. 

Let us now consider the stability of \po as a solution of \nf in the parameter plane $H = \{(\lambda, K) \mid \lambda < 0, K \in \mathbb{R} \}$ and fix a point $(\lambda_0, K_0) \in H$. For $K = 0$; system \nf reduces to \eqref{eq: uncontrolled normal form} and Lemma \ref{lem: stability periodic solution of uncontrolled system} gives that for $(\lambda, K) = (\lambda_0, 0)$  we have exactly one Floquet exponent in the right half of the complex plane. 

If a Floquet exponent moves from the right to the left half of the complex plane or vice versa, it should cross the imaginary axis \cite{introductionfde} If the Floquet exponent crosses the imaginary axis at the point $i \omega$ with $\omega \neq 0$, then the number of Floquet exponents in the right half of the complex plane changes by two, since if $\Delta(i \omega) = 0$, then also $\Delta(- i \omega) = 0$.

Now let us move from $(\lambda_0, 0)$ to the point $(\lambda_0, K_0)$ and suppose that we do not cross a point $(\lambda_0, K')$ such that for $\lambda = \lambda_0, K = K'$, $\mu = 0$ is a non--trivial solution of \eqref{eq: ce controlled}, then the previous remarks imply that on the way from $(\lambda_0, 0)$ to $(\lambda_0, K_0)$ the number of Floquet exponents can only change by an even number; since for $(\lambda_0, 0)$ the number of Floquet exponent is one, this gives that for $(\lambda_0, K_0)$ the number of Floquet exponents in the right half of the complex plane is odd. Since the number of Floquet multipliers in the right half of the complex plane is always non--negative, we see that it is at least one. Therefore, the periodic solution \po of \nf is unstable voor $(\lambda, K) =(\lambda_0, K_0)$. Thus, we find that a necessary condition for \po to be stable as a solution of \nf for $(\lambda, K) = (\lambda_0, K_0)$ is that on the way from $(\lambda_0, 0)$ to $(\lambda_0, K_0)$ we cross a point such that $\mu = 0$ is a non--trivial solution of \eqref{eq: ce controlled}. 

It holds that $\mu = 0$ is a non--trivial root of $\det \Delta(\mu) = 0$ if and only if $(\det \Delta(\mu))/\mu= 0$. Using \eqref{eq: ce controlled} gives that
\begin{align*}
\frac{\det \Delta(\mu)}{\mu} = \mu + 2 K \cos \beta (1- e^{- \mu \tau}) + 2 \lambda + 2 \lambda K \cos \beta \frac{1 - e^{- \mu \tau}}{\mu}\\ + K^2 \frac{(1 - e^{- \mu \tau})^2}{\mu} + 2 \lambda \gamma K \sin \beta \frac{1 - e^{- \mu \tau}}{\mu} 
\end{align*}
Combining this with
\begin{align*}
\frac{1 - e^{- \mu \tau}}{\mu} = \tau_p + \mathcal{O}(\mu)
\end{align*}
gives that $\mu = 0$ is a non--trivial root of $\det \Delta(\mu) = 0$ if and only if 
$$2 \lambda (1 + \tau K (\cos \beta + \gamma \sin \beta)) = 0.$$ 
For $\lambda < 0$, we now find that $\mu = 0$ is a non--trivial root of $\det \Delta(\mu) = 0$ if and only if $1 + 2 \tau K (\cos \beta + \gamma \sin \beta) = 0$. 

We note that the equation $1 + 2 \tau K (\cos \beta + \gamma \sin \beta) = 0$ defines a curve  $\ell$ in the parameter plane $H$. Let $(\lambda_0, K_0)$ be as above; since for $K = 0$ we have that $1 + 2 \pi K (\cos \beta + \gamma \sin \beta)  = 1 > 0$, we cross the curve $\ell$ on the way from $(\lambda_0, 0)$ to $(\lambda_0, K_0)$ if and only if $1 + \tau K( \cos \beta + \gamma \sin \beta) < 0$ for $(\lambda, K) = (\lambda_0, K_0)$. This proves the lemma. 
\end{proof}

\section{Hopf bifurcation and stability conditions} \label{sec: pyragas curve}
In the previous section, we used Floquet theory to determine necessary conditions for the periodic orbit \po of \nf to be stable. In this section, we use -- inspired by \cite{counterexample} and \cite{oddnumberlimitation} -- the Hopf bifurcation theorem to find sufficient conditions for the periodic orbit \po to be stable as a solution of \nf for parameter values near the bifurcation point. In particular, we find conditions for which the periodic solution \eqref{eq: periodic orbit} of \eqref{eq: normal form} arises from a Hopf bifurcation. Using that a Hopf bifurcation is either subcritical (an unstable periodic orbit arises for parameter values where the fixed point is stable) or supercritical (a stable periodic orbit arises for parameter values where the fixed point is unstable), we then determine for which parameter values \eqref{eq: periodic orbit} is (un)stable as a solution of \eqref{eq: normal form}. 

We note that in the Hopf bifurcation theorem (see Theorem \ref{thm: occurence of the Hopf bifurcation} below), the parameters are varied along a curve in parameter space. In order to apply the Hopf bifurcation theorem to system \eqref{eq: normal form}, we should therefore choose a one-dimensional curve through the parameter space to approach the bifurcation point. There are, of course, different ways to do this and different curves of approach will give us different information on the behaviour of the controlled system. In this section, the choice of curve is motivated by the fact that we know a priori for which parameter values in the $(\lambda, \tau)$--plane a periodic solution exists. 

Following \cite{oddnumberlimitation}, we introduce the following definitions: 

\begin{defn} \label{def: pyragas curve}
We define the \emph{Pyragas curve} as the curve in $(\lambda, \tau)$-parameter space given by the graph of $\tau(\lambda) = \frac{2 \pi}{1- \gamma \lambda}$ with $\lambda$ in the domain $(-\infty, 0) \backslash \{\frac{1}{\gamma} \}$. 
\end{defn} 
We note that for parameter values on the Pyragas curve, \eqref{eq: periodic orbit} is a solution of \eqref{eq: normal form}. 
\begin{defn} \label{def: extended pyragas curve}
We define the \emph{extended Pyragas curve} as the curve in $(\lambda, \tau)$-parameter space given by the graph of $\tau(\lambda) = \frac{2 \pi}{1- \gamma \lambda}$ with $\lambda$ in the domain $(-\infty, \frac{1}{\gamma})$ if $\gamma > 0$ and $\lambda$ in the domain $(- \frac{1}{\gamma}, \infty)$ if $\gamma < 0$. 
\end{defn}

In this section, we approach the point $(\lambda, \tau) = (0, 2\pi)$ over the extended Pyragas curve. We show that, under certain conditions on parameter values, we find a Hopf bifurcation of the origin for $(\lambda, \tau) = (0, 2 \pi)$. Uniqueness of the periodic orbit arising from the Hopf bifurcation now directly guarantees that the periodic orbit \eqref{eq: periodic orbit} of \eqref{eq: normal form} arises from a Hopf bifurcation for parameter values near the bifurcation point. 

\begin{figure}
\centering
\includegraphics[width = 0.6 \textwidth]{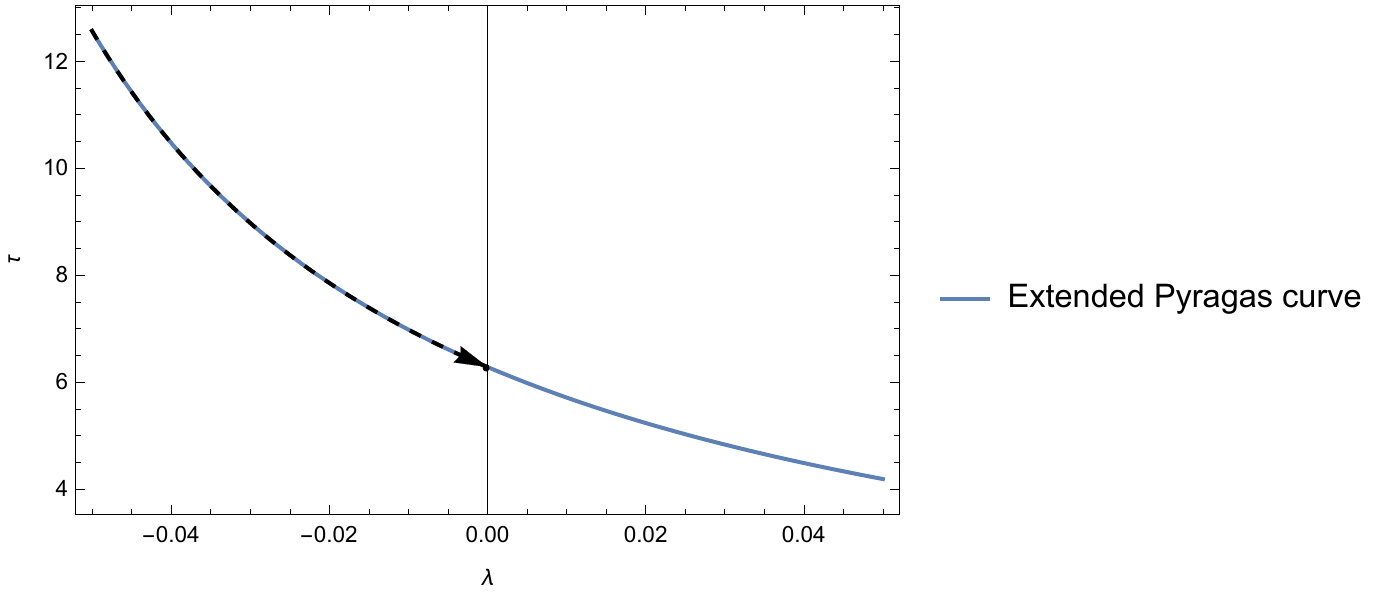}
\caption{The Hopf bifurcation point $(\lambda, \tau) =(0, 2 \pi)$ approached over the extended Pyragas curve.}
\end{figure}

We first state Theorem X.2.7 and Theorem X.3.9 from \cite{delayequations} on the Hopf bifurcation for differential delay equations. 

\begin{theorem}[Occurence of a Hopf bifurcation] \label{thm: occurence of the Hopf bifurcation}
Let us consider the differential delay equation 
\begin{align}
\begin{cases}
\dot{x}(t) = A(\mu) x(t) + B(\mu) x(t - \tau) + g(x_t, \mu) \quad & \mbox{for} \ t \geq 0 \\
x(t) = \phi(t) \quad & \mbox{for} \ - \tau \leq t \leq 0
\end{cases} \label{eq: dde stelling richting hopf bifurcatie}
\end{align}
where $\mu$ is a scalar parameter, $x_t \in \mathcal{C}([-\tau, 0], \mathbb{R}^n)$ is defined as $x_t(\theta) = x(t + \theta)$, $A(\mu), \ B(\mu)$ are $n \times n$-matrices, $\mu \mapsto A(\mu), \ \mu \mapsto B(\mu)$ are smooth maps, $g: \mathcal{C}([-\tau, 0], \mathbb{R}^n) \times \mathbb{R} \to \mathbb{R}^n$ is at least $\mathcal{C}^2$, $g(0, \mu) = D_1 g(0, \mu)$ for all $\mu$ and $\phi \in \mathcal{C}([-\tau, 0], \mathbb{R}^n)$. Denote the characteristic function of \eqref{eq: dde stelling richting hopf bifurcatie} by $\Delta(\lambda, \mu)$. Assume that there exists an $\omega_0 \in \mathbb{R}\backslash \{0 \}$ and a $\mu_0 \in \mathbb{R}$ such that $\Delta(i \omega_0, \mu_0) = 0$.  Let $p, q \in \mathbb{C}^n$ satisfy
\begin{align} \label{eq: definitie p, q, phi}
\Delta(i \omega_0, \mu_0) p = 0, \ \Delta(i \omega_0, \mu_0)^T q = 0, \ q D_1 \Delta(i \omega_0, \mu_0) p = 1
\end{align} 
If $\re(q \cdot D_2 \Delta(i \omega_0, \mu_0) p) < 0$, $i \omega_0$ is a simple root of $\Delta(z, \mu_0)$ and no other roots of $\Delta(z, \mu_0)$ belong to $i \omega_0 \mathbb{Z}$, a Hopf bifurcation of the origin of \eqref{eq: dde stelling richting hopf bifurcatie} occurs. 
\end{theorem}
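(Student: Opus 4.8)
This statement is Theorem X.2.7 together with Theorem X.3.9 of \cite{delayequations}; I only sketch the structure of the argument. The plan is to reduce the infinite--dimensional bifurcation problem to a planar one and then invoke the classical Poincar\'e--Andronov--Hopf theorem. First I would rewrite \eqref{eq: dde stelling richting hopf bifurcatie} as an abstract semilinear equation on $X = \mathcal{C}([-\tau, 0], \mathbb{R}^n)$, whose linear part generates a $C_0$--semigroup with generator $A_\mu$ having spectrum equal to the zero set of $\det \Delta(\cdot, \mu)$, and whose nonlinearity $g$ vanishes to second order at the origin. Since $g$ need not be relatively bounded with respect to $A_\mu$, this has to be done in the sun--star ($\odot\ast$) calculus, where $g$ is read as a map into the larger space $X^{\odot\ast}$; setting this up correctly is the real backbone of the proof.

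The next step would be to establish transversality of the eigenvalue crossing. Differentiating $\Delta(\lambda(\mu), \mu)\, p(\mu) = 0$ along the branch $\lambda(\mu)$ of the simple eigenvalue with $\lambda(\mu_0) = i \omega_0$, pairing on the left with $q$, and using $q\, \Delta(i \omega_0, \mu_0) = 0$ together with the normalization $q\, D_1 \Delta(i \omega_0, \mu_0)\, p = 1$, I would obtain $\lambda'(\mu_0) = - q\, D_2 \Delta(i \omega_0, \mu_0)\, p$. The hypothesis $\re( q\, D_2 \Delta(i \omega_0, \mu_0)\, p ) < 0$ then says precisely that $\lambda(\mu)$, and with it its complex conjugate, crosses the imaginary axis with strictly positive speed as $\mu$ increases through $\mu_0$. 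Here the simplicity of $i \omega_0$ and the absence of further roots of $\Delta(\cdot, \mu_0)$ in $i \omega_0 \mathbb{Z}$ guarantee that the center subspace of $A_{\mu_0}$ is exactly two--dimensional, spanned by the real and imaginary parts of $\theta \mapsto p\, e^{i \omega_0 \theta}$, and that there is no resonance with periodic solutions of period close to an integer multiple of $2 \pi / \omega_0$.

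After that I would append the trivial equation $\dot\mu = 0$ and apply the parameter--dependent center manifold theorem in the sun--star setting, obtaining a locally invariant $\mathcal{C}^k$ center manifold $W^c \subset X \times \mathbb{R}$, tangent at the origin to $X_0 \oplus \mathbb{R}$, on which the flow reduces to a smooth one--parameter family of planar vector fields which, at $\mu = \mu_0$, has a pair of purely imaginary eigenvalues $\pm i \omega_0$ leaving the axis transversally. The classical planar Hopf theorem applied to this reduced family then yields a unique $\mathcal{C}^{k-1}$ branch of small--amplitude periodic orbits bifurcating from $(0, \mu_0)$ with period tending to $2 \pi / \omega_0$; by the reduction principle these lift to periodic solutions of \eqref{eq: dde stelling richting hopf bifurcatie}, and the non--resonance condition excludes any further small periodic solutions near the origin for $\mu$ near $\mu_0$.

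The hard part is not this planar reduction, which is classical, but the functional analysis that makes it legitimate: verifying that \eqref{eq: dde stelling richting hopf bifurcatie} really fits the sun--star framework, that $A_\mu$ has a spectral gap isolating $i \omega_0$ from the rest of the spectrum and from $i \omega_0 \mathbb{Z}$, and that the center manifold and its Taylor expansion can be built with the needed smoothness even though $g$ is defined only on $X$. An alternative I would keep in reserve is a Lyapunov--Schmidt reduction on a space of $2 \pi$--periodic functions, obtained after rescaling time by the unknown frequency $\omega$ so that the fixed delay $\tau$ enters only as a coefficient: the linearized period map has a two--dimensional kernel, its complement is handled by the implicit function theorem, and one is left with a scalar bifurcation equation in the amplitude whose solvability for small amplitude follows from the transversality condition together with the $S^1$ phase--shift symmetry. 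This second route is also the one that produces the closed--form coefficient formulas used elsewhere in the paper.
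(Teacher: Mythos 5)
Your proposal is consistent with the paper, which in fact offers no proof of this statement at all: it is quoted verbatim as Theorem X.2.7 (with the transversality remark) of \cite{delayequations}, so there is nothing in the paper to compare your argument against. Your sketch correctly identifies that source and faithfully outlines the strategy used there (sun--star reformulation, the transversality identity $\lambda'(\mu_0) = -q\, D_2\Delta(i\omega_0,\mu_0)\,p$ obtained by pairing the differentiated eigenvalue equation with $q$, and the parameter--dependent center manifold reduction to the planar Hopf theorem), and it is consistent with the remark the authors make immediately after the theorem about the sign condition forcing the eigenvalue into the right half--plane.
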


We remark that the condition that $\re(q \cdot D_2 \Delta(i \omega_0, \mu_0) p) < 0$ ensures that the eigenvalue on the imaginary axis that exists for $\mu = \mu_0$, moves to the right half of the complex plane if we vary $\mu$. 

\begin{theorem}[Direction of the Hopf bifurcation] \label{thm: direction of the Hopf bifurcation}
Let us study the system \eqref{eq: dde stelling richting hopf bifurcatie} with $A, B, g, p, q, \mu_0$ and $\omega_0$ as in Theorem \ref{thm: occurence of the Hopf bifurcation}. 
If we introduce
\begin{align}
\mu_2 = \frac{\re(c)}{\re(q \cdot D_2 \Delta(i \omega_0, \mu_0) p) } \label{eq: derde orde hopf coefficient}
\end{align}
with
\begin{equation}
\begin{aligned}
c &= \frac{1}{2} q \cdot D_1^3 g(0, \mu_0)(\phi, \phi, \overline{\phi}) + q\cdot D_1^2 g(0, \mu_0) (e^{0 .} \Delta(0, \mu_0)^{-1} D_1^2 g(0, \mu_0) (\phi, \overline{\phi}), \phi)  \\
& + \frac{1}{2} q \cdot D_1^2 g(0, \mu_0)(e^{2 i \omega_0 .} \Delta(2 i \omega_0, \mu_0)^{-1} D_1^2 g(0, \mu_0) (\phi, \phi), \overline{\phi}) 
\end{aligned} 
\label{eq: derde orde hopf coefficient c}
\end{equation} 
then for $\mu_2 < 0$, the Hopf bifurcation is subcritical; for $\mu_2 > 0$, the Hopf bifurcation is supercritical.
\end{theorem}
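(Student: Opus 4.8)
The plan is to derive the formula by reducing \eqref{eq: dde stelling richting hopf bifurcatie} to its two-dimensional centre manifold at $(\mu,\text{origin})=(\mu_0,0)$ and then reading off the direction of bifurcation from the cubic coefficient of the Poincar\'e normal form on that manifold, in complete analogy with the finite-dimensional Hopf theory but carried out in the sun--star (duality) calculus for delay equations used in \cite{delayequations}. Since $i\omega_0$ is a simple root of $\Delta(\cdot,\mu_0)$ and no other root lies in $i\omega_0\mathbb{Z}$, the generalized eigenspace of the generator of the linearized semiflow associated with $\{i\omega_0,-i\omega_0\}$ is two-dimensional, spanned by $\theta\mapsto e^{i\omega_0\theta}p$ and its complex conjugate, with $p$ from \eqref{eq: definitie p, q, phi}; the adjoint eigenvector $q$, together with the normalization $q\,D_1\Delta(i\omega_0,\mu_0)p=1$, realizes the spectral projection through the standard bilinear pairing. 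This splits $\mathcal{C}([-\tau,0],\mathbb{R}^n)$ as a centre part plus a complement, uniformly for $\mu$ near $\mu_0$, and the parameter-dependent centre manifold theorem for \eqref{eq: dde stelling richting hopf bifurcatie} provides a locally invariant $\mathcal{C}^2$ manifold tangent to the centre eigenspace on which the reduced dynamics takes the form $\dot z=\mu_{\mathrm{eig}}(\mu)z+(\text{quadratic and cubic terms in }z,\overline z)+\cdots$ with $\mu_{\mathrm{eig}}(\mu_0)=i\omega_0$ and $\re\mu_{\mathrm{eig}}'(\mu_0)=-\re\big(q\cdot D_2\Delta(i\omega_0,\mu_0)p\big)$, i.e.\ precisely the transversality quantity appearing in Theorem \ref{thm: occurence of the Hopf bifurcation}.

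The decisive object is the coefficient of the resonant monomial $z^2\overline z$ in this reduced equation at $\mu=\mu_0$. To compute it I would first expand the centre manifold to second order: writing the complementary component as $w(z,\overline z)=e^{2i\omega_0\cdot}h_{20}+h_{11}+e^{-2i\omega_0\cdot}\overline{h_{20}}+\cdots$, the homological equation for delay equations reduces to $h_{20}=\tfrac12\Delta(2i\omega_0,\mu_0)^{-1}D_1^2g(0,\mu_0)(\phi,\phi)$ and $h_{11}=\Delta(0,\mu_0)^{-1}D_1^2g(0,\mu_0)(\phi,\overline\phi)$ (up to the combinatorial weights), which are well-defined exactly because $0$ and $2i\omega_0$ are not roots of $\Delta(\cdot,\mu_0)$. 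Substituting $x_t=z\,e^{i\omega_0\cdot}p+\overline z\,e^{-i\omega_0\cdot}\overline p+w(z,\overline z)+\cdots$ into \eqref{eq: dde stelling richting hopf bifurcatie}, applying the projection $q$, and collecting all contributions proportional to $z^2\overline z$ yields exactly the quantity $c$ of \eqref{eq: derde orde hopf coefficient c}: the first summand is the direct cubic term $\tfrac12 q\cdot D_1^3g(0,\mu_0)(\phi,\phi,\overline\phi)$, while the remaining two summands are the feedback of the second-order centre manifold through the quadratic nonlinearity $D_1^2 g(0,\mu_0)$, with the arguments $e^{0\cdot}\Delta(0,\mu_0)^{-1}D_1^2g(0,\mu_0)(\phi,\overline\phi)$ and $e^{2i\omega_0\cdot}\Delta(2i\omega_0,\mu_0)^{-1}D_1^2g(0,\mu_0)(\phi,\phi)$ coming from $h_{11}$ and $h_{20}$ respectively.

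With the reduced equation brought to the planar form $\dot z=\mu_{\mathrm{eig}}(\mu)z+c\,z^2\overline z+O(|z|^4)$ after the standard near-identity transformations that remove the non-resonant quadratic and cubic terms, I would conclude by the classical Hopf bifurcation formula: in polar coordinates the amplitude obeys $\dot\rho\approx\re\mu_{\mathrm{eig}}(\mu)\,\rho+\re(c)\,\rho^3$, whose nontrivial equilibrium satisfies $\rho^2\approx-\re\mu_{\mathrm{eig}}'(\mu_0)(\mu-\mu_0)/\re(c)$. Hence the branch of periodic orbits lies on the side of $\mu_0$ where the equilibrium has just become unstable precisely when $\re(c)$ and $\re\mu_{\mathrm{eig}}'(\mu_0)$ have the same sign; substituting $\re\mu_{\mathrm{eig}}'(\mu_0)=-\re(q\cdot D_2\Delta(i\omega_0,\mu_0)p)$ identifies this with $\mu_2<0$, which is the subcritical case, while $\mu_2>0$ places the branch on the side where the equilibrium is unstable (and, within the centre manifold, the orbit is stable), i.e.\ the supercritical case, as claimed in \eqref{eq: derde orde hopf coefficient}.

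I expect the main obstacle to be the bookkeeping in the second-order centre manifold step: correctly tracking which slots of the bilinear and trilinear forms $D_1^2 g$, $D_1^3 g$ receive the centre eigenfunctions and which receive the solved-for quadratic corrections $h_{11}$, $h_{20}$, and checking that the duality pairing together with the normalization $q\,D_1\Delta(i\omega_0,\mu_0)p=1$ makes the projected cubic coefficient come out as $c$ with exactly the factors $\tfrac12$, $1$, $\tfrac12$ displayed in \eqref{eq: derde orde hopf coefficient c}, without spurious constants. The soft analysis underlying the reduction --- existence, smoothness and attractivity of the parameter-dependent centre manifold for retarded equations, and the validity of the reduction principle --- is standard in the framework of \cite{delayequations}, and I would simply invoke it.
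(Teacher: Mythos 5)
The paper does not prove this theorem: it is stated verbatim as a citation of Theorems X.2.7 and X.3.9 of \cite{delayequations}, so there is no in-paper proof to compare your attempt against. Your outline is nonetheless the standard and essentially correct route to this result, and it is consistent with how the cited monograph obtains it: reduce to the two-dimensional centre manifold (in the sun--star framework, since $C([-\tau,0],\mathbb{R}^n)$ is not reflexive and the spectral projection must be realized through the appropriate duality pairing), solve the homological equation for the quadratic corrections $h_{11}$ and $h_{20}$ --- which is where the well-definedness of $\Delta(0,\mu_0)^{-1}$ and $\Delta(2i\omega_0,\mu_0)^{-1}$ enters, guaranteed by the nonresonance hypothesis that no root of $\Delta(\cdot,\mu_0)$ other than $\pm i\omega_0$ lies in $i\omega_0\mathbb{Z}$ --- and read off the resonant cubic coefficient $c$. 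Your identification $\re\mu_{\mathrm{eig}}'(\mu_0)=-\re\bigl(q\cdot D_2\Delta(i\omega_0,\mu_0)p\bigr)$, obtained by differentiating $\Delta(\mu_{\mathrm{eig}}(\mu),\mu)p(\mu)=0$ and using the normalization $q\,D_1\Delta(i\omega_0,\mu_0)p=1$, is correct, and the resulting sign bookkeeping $\mu-\mu_0\approx\mu_2\rho^2$ does deliver the stated sub/supercritical dichotomy. The one honest caveat is the one you flag yourself: as written this is a strategy, not a proof --- the entire quantitative content of \eqref{eq: derde orde hopf coefficient c} (the factors $\tfrac12$, $1$, $\tfrac12$ and the placement of $h_{11}$ versus $h_{20}$ in the slots of $D_1^2g$) is deferred to ``bookkeeping,'' and the parameter-dependent centre manifold theorem and reduction principle for retarded equations are invoked rather than established. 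Since the paper itself treats the whole theorem as a black box from \cite{delayequations}, that level of detail is appropriate here.
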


In order to apply Theorem \ref{thm: occurence of the Hopf bifurcation} and \ref{thm: direction of the Hopf bifurcation}  to system \eqref{eq: normal form}, we first note that system \eqref{eq: normal form} is equivalent to the following system on $\mathbb{R}^2$:
\begin{equation} 
\begin{aligned}
\begin{pmatrix}
\dot{x}_1(t) \\ \dot{x}_2(t) 
\end{pmatrix}
&=  \begin{pmatrix}
\lambda - K \cos \beta & -1 + K \sin \beta  \\
1 - K \sin \beta & \lambda  - K \cos \beta
\end{pmatrix} \begin{pmatrix}
x_1(t) \\ x_2 (t)
\end{pmatrix}
\\ &\qquad+  \begin{pmatrix}
x_1 (t), & x_2 (t)
\end{pmatrix}
\begin{pmatrix}
x_1(t) \\ x_2 (t)
\end{pmatrix}
\begin{pmatrix}
1 & - \gamma \\
\gamma & 1
\end{pmatrix}
\begin{pmatrix}
x_1 (t) \\ x_2 (t)
\end{pmatrix} \\
&\qquad +  K \begin{pmatrix}
 \cos \beta & -  \sin \beta \\
 \sin \beta & \cos \beta
\end{pmatrix}  
\begin{pmatrix}
x_1 (t - \tau) \\
x_2 (t - \tau) 
\end{pmatrix}
\end{aligned} \label{eq: normal form op R^2}
\end{equation} 
The characteristic matrix of the linearization around zero is given by
\begin{align}
\Delta(\mu, \lambda, \tau) = \mu I - \begin{pmatrix}
\lambda - K \cos \beta & -1 + K \sin \beta  \\
1 - K \sin \beta & \lambda  - K \cos \beta
\end{pmatrix}  -  K e^{- \mu \tau} \begin{pmatrix}
 \cos \beta & -  \sin \beta \\
 \sin \beta & \cos \beta
\end{pmatrix}.   \label{eq: characteristic equation criticality}
\end{align} 
The non--linear term in \eqref{eq: normal form op R^2}, can be given by the function $g: \mathcal{C}([-\tau, 0], \mathbb{R}^2) \times \mathbb{R} \to \mathbb{R}^2$ given by
\begin{align} \label{eq: non-linear term}
g(x_t, \lambda) = \inner{x_t(0), x_t(0)} C x_t(0) \quad \mbox{with} \quad C = \begin{pmatrix}
1 & - \gamma \\
\gamma & 1
\end{pmatrix}.
\end{align}

An application of Theorem \ref{thm: occurence of the Hopf bifurcation} yields the following result.
 
\begin{theorem} \label{lem: conditions hopf over pyragas}
Consider the system \eqref{eq: normal form}. Assume
\begin{align}
1 + 2 \pi K e^{i \beta} &\neq 0 \label{eq: enkelvoudige multipliciteit pyragas curve} 
\end{align}
If 
\begin{align}
1 + 2 \pi K \left[ \cos \beta + \gamma \sin \beta \right] > 0  \label{eq: noemer ongelijk nul pyragas curve 1}
\end{align}
then we find a Hopf bifurcation at $(\lambda, \tau) = (0, 2\pi)$ if we approach the point $(\lambda, \tau) = (0, 2\pi)$ over the extended Pyrags curve from the left.

If 
\begin{align}
1 + 2 \pi K \left[ \cos \beta + \gamma \sin \beta \right] <  0  \label{eq: noemer ongelijk nul pyragas curve 2}
\end{align}
then we find a Hopf bifurcation at $(\lambda, \tau) = (0, 2\pi)$ if we approach the point $(\lambda, \tau) = (0, 2\pi)$ over the extended Pyragas curve from the right.     
\end{theorem}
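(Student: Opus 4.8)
The plan is to verify the hypotheses of Theorem~\ref{thm: occurence of the Hopf bifurcation} for the planar system \eqref{eq: normal form op R^2} equivalent to \eqref{eq: normal form}, using $\lambda$ as the scalar bifurcation parameter and slaving the delay to it along the extended Pyragas curve, $\tau=\tau(\lambda)=\tfrac{2\pi}{1-\gamma\lambda}$, so that the candidate bifurcation value is $\lambda_0=0$, where $\tau(0)=2\pi$. The nonlinearity \eqref{eq: non-linear term} is smooth and vanishes to second order at the origin, which is an equilibrium for every $\lambda$, so the regularity assumptions on $g$ are immediate. Since Theorem~\ref{thm: occurence of the Hopf bifurcation} is stated for a fixed delay while here the delay moves with the parameter, I would first rescale time by $t\mapsto\tau(\lambda)t$ to obtain an equivalent equation with delay $1$ and coefficient matrices depending smoothly on $\lambda$; this multiplies every characteristic root by $\tau(\lambda)$ and therefore does not affect the sign of the real part of the crossing speed, so I will work directly with the characteristic matrix \eqref{eq: characteristic equation criticality}, writing $D_1\Delta$ for the derivative in the spectral variable $\mu$ and $D_2\Delta$ for the total derivative $\partial_\lambda\Delta+\tau'(\lambda)\,\partial_\tau\Delta$ along the curve.

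First I would locate the imaginary root. At $(\lambda,\tau)=(0,2\pi)$ one has $e^{-2\pi i}=1$, so the rotation terms in \eqref{eq: characteristic equation criticality} cancel and $\Delta(i,0,2\pi)=iI-J$ with $J=\bigl(\begin{smallmatrix}0&-1\\1&0\end{smallmatrix}\bigr)$, which is singular; hence $i\omega_0:=i$ is a characteristic root. More generally $\det\Delta(ik,0,2\pi)=\det(ikI-J)=1-k^2$ for $k\in\mathbb{Z}$, so among the points of $i\omega_0\mathbb{Z}$ only $\pm i\omega_0$ are roots and in particular $\mu=0$ is not. Next I would write down the null vectors: $\Delta(i,0,2\pi)p=0$ for $p=(1,-i)^{T}$ and $q\,\Delta(i,0,2\pi)=0$ for the row vector $q=(1,i)$. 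The observation that makes all further computation painless is that $p$ is simultaneously an eigenvector of $J$, with eigenvalue $i$, and of the rotation matrix $R_\beta:=\bigl(\begin{smallmatrix}\cos\beta&-\sin\beta\\\sin\beta&\cos\beta\end{smallmatrix}\bigr)$, with eigenvalue $e^{i\beta}$, so every matrix below sends $p$ to a scalar multiple of itself. Since $D_1\Delta(i,0,2\pi)=I+2\pi KR_\beta$, we get $D_1\Delta(i,0,2\pi)p=(1+2\pi Ke^{i\beta})p$, and as $qp=2$ this gives $q\,D_1\Delta(i,0,2\pi)p=2(1+2\pi Ke^{i\beta})$. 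Hypothesis \eqref{eq: enkelvoudige multipliciteit pyragas curve} is precisely what makes this nonzero, so $i\omega_0$ is a simple root and I may rescale $q$ by $[2(1+2\pi Ke^{i\beta})]^{-1}$ to achieve the normalization in \eqref{eq: definitie p, q, phi}.

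The step I expect to carry the real weight is the transversality condition, since it requires differentiating \eqref{eq: characteristic equation criticality} along the curve $\tau=\tau(\lambda)$ and then keeping the normalization factor straight. With $\tau'(0)=2\pi\gamma$ the chain rule gives $D_2\Delta(i,0,2\pi)=-I+2\pi i\gamma KR_\beta$, hence $D_2\Delta(i,0,2\pi)p=(-1+2\pi i\gamma Ke^{i\beta})p$, and with the normalized $q$,
\[
q\cdot D_2\Delta(i\omega_0,0)\,p=\frac{-1+2\pi i\gamma Ke^{i\beta}}{1+2\pi Ke^{i\beta}}.
\]
Multiplying numerator and denominator by $\overline{1+2\pi Ke^{i\beta}}$ and simplifying, the $\gamma$-dependent cross terms cancel and I would obtain
\[
\re\bigl(q\cdot D_2\Delta(i\omega_0,0)\,p\bigr)=\frac{-\bigl(1+2\pi K(\cos\beta+\gamma\sin\beta)\bigr)}{\bigl|1+2\pi Ke^{i\beta}\bigr|^{2}},
\]
where the denominator is strictly positive by \eqref{eq: enkelvoudige multipliciteit pyragas curve}.

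Finally I would split into the two cases. If \eqref{eq: noemer ongelijk nul pyragas curve 1} holds this quantity is negative, so every hypothesis of Theorem~\ref{thm: occurence of the Hopf bifurcation} is satisfied with $\lambda$ increasing through $0$, i.e. approaching $(0,2\pi)$ along the extended Pyragas curve from the left, and a Hopf bifurcation occurs there. If instead \eqref{eq: noemer ongelijk nul pyragas curve 2} holds the quantity is positive; replacing the parameter $\lambda$ by $-\lambda$ --- equivalently, traversing the extended Pyragas curve towards $(0,2\pi)$ from the right --- flips the sign of $D_2\Delta$ and restores $\re(q\cdot D_2\Delta\,p)<0$, so Theorem~\ref{thm: occurence of the Hopf bifurcation} again applies and yields a Hopf bifurcation at $(\lambda,\tau)=(0,2\pi)$. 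The only genuinely delicate points in the argument are the time-rescaling bookkeeping, forced on us because the cited theorem fixes the delay, and getting the real part in the transversality condition right; the rest is linear algebra organised around the common eigenvector $p$.
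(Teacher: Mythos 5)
Your proposal is correct and follows essentially the same route as the paper: compute $p=(1,-i)^{T}$ and $q=\alpha(1,i)$ with $\alpha=[2(1+2\pi Ke^{i\beta})]^{-1}$, differentiate the characteristic matrix along the parametrized extended Pyragas curve to get $D_2\Delta(i,0)=-I+2\pi i\gamma K R_\beta$, and arrive at the same expression $\re(q\cdot D_2\Delta\,p)=-\bigl(1+2\pi K(\cos\beta+\gamma\sin\beta)\bigr)/\bigl|1+2\pi Ke^{i\beta}\bigr|^{2}$, with the sign flip under $\theta\mapsto-\theta$ handling the approach from the right. Your explicit verification that $\det\Delta(ik,0,2\pi)=1-k^2$ rules out other roots in $i\mathbb{Z}$, and your remark on rescaling time to fix the delay, are welcome refinements of points the paper passes over quickly, but they do not change the argument.
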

\begin{proof} 
We note that for $(\lambda, \tau) = (0, 2 \pi)$, $\mu = i$ is a root of the characteristic equation $\det \Delta(z) = $, where $\Delta(z)$ is given by \eqref{eq: characteristic equation criticality}. Using this fact in combination with the definition of $p, q$ as in Theorem \ref{thm: occurence of the Hopf bifurcation}, we find that 
\begin{align}
p = \begin{pmatrix}
1 \\ -i
\end{pmatrix} \label{eq: vector p}, \quad q = \alpha \begin{pmatrix}
1 \\  i
\end{pmatrix}
\end{align}
The normalization factor $\alpha \in \mathbb{C}$ in \eqref{eq: vector p} should be chosen such that
\begin{align}
q \cdot D_1 \Delta(i \omega, \lambda) p = 1 \label{eq: normalisatie alpha}
\end{align}
(see \eqref{eq: definitie p, q, phi}). Using \eqref{eq: characteristic equation criticality}, we note that
\begin{align*}
D_1 \Delta(i, 0, 2 \pi) = I + K \tau e^{- i 2\pi} \begin{pmatrix}
\cos \beta & - \sin \beta \\
\sin \beta & \cos \beta
\end{pmatrix} = I + K \tau \begin{pmatrix}
\cos \beta & - \sin \beta \\
\sin \beta & \cos \beta
\end{pmatrix}
\end{align*}
Thus we find that
\begin{align*}
q \cdot D_1 \Delta(i \omega, \lambda)p &= \alpha \left((1, i) \begin{pmatrix}
1 \\ -i
\end{pmatrix} + K \tau  (1, i) \begin{pmatrix}
\cos \beta & - \sin \beta \\
\sin \beta & \cos \beta
\end{pmatrix} \begin{pmatrix}
1 \\ -i
\end{pmatrix} \right) \\
& = 2 \alpha \left(1 + K \tau e^{i \beta} \right)
\end{align*}
Condition \eqref{eq: normalisatie alpha} therefore yields
\begin{align}
\alpha = \frac{1}{2 (1 + K \tau e^{i \beta})}. \label{eq: waarde alpha}
\end{align}

If we approach the point $(\lambda, \tau) = (0, 2 \pi)$ over the extended Pyragas curve from the left, we can parametrize the path by
\begin{align}
(\lambda(\theta), \tau(\theta)) = \left(\theta, \frac{2 \pi}{1- \gamma \theta} \right), \quad \theta \in \mathbb{R} \backslash \left \{ \frac{1}{\gamma} \right \}. \label{eq: parametrisatie extended pyragas curve links}
\end{align}
Using \eqref{eq: characteristic equation criticality}, we find that, for parameter values on this curve, the characteristic matrix is given by
\begin{align*}
\Delta(\mu, \theta) = \mu I - \begin{pmatrix}
\theta - K \cos \beta & -1 + K \sin \beta \\
1 - K \sin \beta & \theta - K \cos \beta
\end{pmatrix} - K e^{- \mu \tau(\theta)} \begin{pmatrix}
\cos \beta & - \sin \beta \\
\sin \beta & \cos \beta
\end{pmatrix}.
\end{align*}
We are interested in the Hopf bifurcation at $(\lambda, \tau) = (0, 2 \pi)$. We note that the path parametrized by \eqref{eq: parametrisatie extended pyragas curve links} reaches this point for $\theta = 0$. We find that
\begin{align*}
D_2 \Delta(i, 0) &= - \left. \frac{d}{d \theta} \right|_{\theta = 0} \begin{pmatrix}
\theta - K \cos \beta & -1 + K \sin \beta \\
1 - K \sin \beta & \theta - K \cos \beta
\end{pmatrix}\\
&\qquad - K e^{- i \tau(0)} \begin{pmatrix}
\cos \beta & - \sin \beta \\
\sin \beta & \cos \beta
\end{pmatrix} \left(- i \left. \frac{d \tau}{d \theta} \right|_{\theta = 0} \right) \\
& = - I + 2 \pi i K \gamma \begin{pmatrix}
\cos \beta & - \sin \beta \\
\sin \beta & \cos \beta
\end{pmatrix}
\end{align*} 
We note that
\begin{align*}
q  \begin{pmatrix}
\cos \beta & - \sin \beta \\
\sin \beta & \cos \beta
\end{pmatrix} p &= \alpha (1, i)  \begin{pmatrix}
\cos \beta & - \sin \beta \\
\sin \beta & \cos \beta
\end{pmatrix} \begin{pmatrix}
1 \\ -i
\end{pmatrix}\\
&= \alpha (1, i) \begin{pmatrix}
\cos \beta + i \sin \beta \\
\sin \beta - i \cos \beta
\end{pmatrix} \\ &
 = 2 \alpha (\cos \beta + i \sin \beta) = 2 \alpha e^{i \beta}
\end{align*}
Since $\alpha$ is given by \eqref{eq: waarde alpha}, we find that
\begin{align*}
q \cdot D_2 \Delta (i, 0) p = - 2 \alpha + 4 \pi i K \gamma \alpha e^{i \beta} = \frac{-1 + 2 \pi i \gamma K e^{i \beta}}{1 + K \tau e^{i \beta}}
\end{align*}
which gives
\begin{align*}
\re (q \cdot D_2 \Delta (i, 0) p) = - \frac{1 + 2 \pi K (\cos \beta + \gamma \sin \beta)}{\left| 1 + K 2 \pi e^{i \beta} \right|^2}
\end{align*}
We conclude that if \eqref{eq: noemer ongelijk nul pyragas curve 1} holds, we have that $\re (q \cdot D_2 \Delta (i, 0) p) < 0$. Condition \eqref{eq: enkelvoudige multipliciteit pyragas curve} ensures that $\mu = i$ has multiplicity one as a root of $\Delta(\mu, 0)$ and one easily verifies  that $\mu = i$ is the only root of $\Delta(\mu, 0)$ of the form $i \mathbb{Z}$.  Therefore if \eqref{eq: enkelvoudige multipliciteit pyragas curve} --  \eqref{eq: noemer ongelijk nul pyragas curve 1} hold, we obtain a Hopf bifurcation if we approach the point $(\lambda, \tau) = (0, 2 \pi)$ over the extended Pyragas curve from left.

Similarly, if we approach the point $(\lambda, \tau) = (0, 2 \pi)$ over the extended Pyragas curve from the right, we parametrize the path by \eqref{eq: parametrisatie extended pyragas curve links} by replacing $\theta \mapsto - \theta$. Denote by $\tilde{\Delta}$ the characteristic matrix of system \eqref{eq: normal form} for parameter values $(\lambda, \tau)$ on this path. A similar analysis then shows that 
\begin{align*}
\re (q \cdot D \tilde{\Delta}_2 (i, 0) p) =  \frac{1 + 2 \pi K (\cos \beta + \gamma \sin \beta)}{\left| 1 + K 2 \pi e^{i \beta} \right|^2}
\end{align*}
Thus, $\re (q \cdot D \tilde{\Delta}_2 (i, 0) p) < 0$ if \eqref{eq: noemer ongelijk nul pyragas curve 2} is satisfied. Therefore, if \eqref{eq: noemer ongelijk nul pyragas curve 2} and \eqref{eq: enkelvoudige multipliciteit pyragas curve} hold, we find a Hopf bifurcation at $(\lambda, \tau) = (0, 2 \pi)$ if we approach this point over the extended Pyragas curve from the right.
\end{proof}

Now that we have derived conditions for a Hopf bifurcation in the origin to occur, we determine the direction of the bifurcation using Theorem \ref{thm: direction of the Hopf bifurcation}. As outlined before, the direction of the Hopf bifurcation will give us conditions for \po to be (un)stable as a solution of \nf. 

\begin{theorem} \label{lem: curve is extended Pyragas curve from the left}
If we approach the Hopf bifurcation point $(\lambda, \tau) = (0, 2 \pi)$ over the extended Pyragas curve from the left, the value of $\mu_2$ as defined in Theorem \ref{thm: direction of the Hopf bifurcation} is given by
\begin{align*}
\mu_2 = -4
\end{align*} 
If we approach the Hopf bifurcation point $(\lambda, \tau) = (0, 2 \pi)$ over the extended Pyragas curve from the right, the value of $\mu_2$ as defined in Theorem \ref{thm: direction of the Hopf bifurcation} is given by
\begin{align*}
\mu_2 = 4
\end{align*} 
\end{theorem}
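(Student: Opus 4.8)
The plan is to apply Theorem \ref{thm: direction of the Hopf bifurcation} directly, so that $\mu_2 = \re(c)/\re(q\cdot D_2\Delta(i,0)p)$, and to reuse the quantities $p$, $q$, $\alpha$, $\omega_0 = 1$ and, crucially, the value of $\re(q\cdot D_2\Delta(i,0)p)$ already obtained in the proof of Theorem \ref{lem: conditions hopf over pyragas}. The only genuinely new computation is that of $\re(c)$, and the key observation that makes this short is that the nonlinearity $g$ in \eqref{eq: non-linear term} is homogeneous of degree three in $x_t(0)$.

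First I would note that $g$ has no quadratic part, so $D_1 g(0,\lambda) = 0$ and $D_1^2 g(0,\lambda) = 0$ identically. Consequently the second and third summands in the formula \eqref{eq: derde orde hopf coefficient c} for $c$ vanish, and $c = \tfrac12\, q\cdot D_1^3 g(0)(\phi,\phi,\overline{\phi})$, where $\phi$ is the eigenfunction associated with the root $\mu = i$, which may be taken as $\phi(\theta) = e^{i\theta}p$, so that $\phi(0) = p = (1,-i)^T$ and $\overline{\phi}(0) = \overline p = (1,i)^T$. In particular the center-manifold contributions involving $\Delta(0,\mu_0)^{-1}$ and $\Delta(2i\omega_0,\mu_0)^{-1}$ never enter, which is what keeps the argument elementary.

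Next I would compute $D_1^3 g(0)$. Writing the cubic map $v\mapsto\inner{v,v}Cv$ as $B(v,v,v)$ with the symmetric trilinear form $B(u,v,w) = \tfrac13\bigl[\inner{u,v}Cw + \inner{u,w}Cv + \inner{v,w}Cu\bigr]$, one has $D_1^3 g(0)(u,v,w) = 6B(u,v,w)$, so that $D_1^3 g(0)(\phi,\phi,\overline{\phi}) = 2\bigl[(p^Tp)C\overline p + 2(p^T\overline p)Cp\bigr]$. The algebra now collapses because $p^Tp = 1 + (-i)^2 = 0$ while $p^T\overline p = 2$, leaving $D_1^3 g(0)(\phi,\phi,\overline{\phi}) = 8\,Cp$. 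Pairing with $q = \alpha(1,i)$ gives $q\cdot Cp = 2\alpha(1+i\gamma)$, hence $c = 4\,q\cdot Cp = 8\alpha(1+i\gamma)$, and substituting the value $\alpha = 1/\bigl(2(1+2\pi K e^{i\beta})\bigr)$ from \eqref{eq: waarde alpha} yields $c = 4(1+i\gamma)/(1+2\pi K e^{i\beta})$. Taking real parts, $\re(c) = 4\bigl(1 + 2\pi K(\cos\beta+\gamma\sin\beta)\bigr)/\lvert 1+2\pi K e^{i\beta}\rvert^2$.

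Finally, dividing by $\re(q\cdot D_2\Delta(i,0)p) = -\bigl(1 + 2\pi K(\cos\beta+\gamma\sin\beta)\bigr)/\lvert 1+2\pi K e^{i\beta}\rvert^2$ (computed in the proof of Theorem \ref{lem: conditions hopf over pyragas}), the common factor cancels and $\mu_2 = -4$ when the bifurcation point is approached over the extended Pyragas curve from the left. For the approach from the right the nonlinearity, and hence $c$, is unchanged, while $\re(q\cdot D\tilde\Delta_2(i,0)p)$ has the opposite sign, so $\mu_2 = +4$. The only obstacle I anticipate is bookkeeping: pinning down the symmetrization constant in $D_1^3 g$ and matching the normalization conventions of \cite{delayequations}; once $D_1^2 g = 0$ is used and $p^Tp = 0$ is exploited, the remaining computation is a short explicit calculation.
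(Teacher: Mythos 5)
Your proposal is correct and follows essentially the same route as the paper's proof: exploit $D_1^2 g(0,\lambda)=0$ to reduce $c$ to the single term $\tfrac12\,q\cdot D_1^3 g(0)(\phi,\phi,\overline{\phi})$, evaluate it using $\langle p,p\rangle=0$ and $\langle p,\overline p\rangle=2$ to get $c=4(1+i\gamma)/(1+2\pi Ke^{i\beta})$, and divide $\re(c)$ by the value of $\re(q\cdot D_2\Delta(i,0)p)$ already computed in Theorem \ref{lem: conditions hopf over pyragas}, whose sign flip between the two approach directions gives $\mu_2=\mp 4$. The only differences are cosmetic (you package the permutation sum as a symmetric trilinear form), and your statement of $\re(c)$ is in fact cleaner than the paper's, which contains a stray $\cos(\beta-\phi)$.
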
 

\begin{proof}
Computing the derivative of \eqref{eq: non-linear term} gives (see \cite{scriptie} for more details):
\begin{align} \label{eq: waarden afgeleiden 1}
D_1^2 g(0, \lambda) &= 0 \quad \mbox{for all} \ \lambda \in \mathbb{R} \\
D_1^3g(\phi, \lambda) (f_1, f_2, f_3) &=  \sum_{\sigma \in S_3} \inner{f_{\sigma(1)}(0), f_{\sigma(2)} (0)} C f_{\sigma(3)} (0) \label{eq: waarden afgeleiden 2}
\end{align}
for all $\phi, f_1, f_2, f_3 \in \mathcal{C}\left([-\tau, 0], \mathbb{R}^2 \right)$. Here, $S_3$ denotes the permutation group of three objects. Using this, we find that
\begin{align*}
c &= \frac{1}{2}  q \cdot D_1^3 g(0, \lambda)(\phi, \phi, \overline{\phi}) + 0 + 0\\ &= \frac{1}{2} q \cdot \left(2 \inner{\phi(0), \phi(0)} C \overline{\phi(0)} + 2 \inner{\overline{\phi(0)}, \phi(0)} C \phi(0) + 2 \inner{\phi(0), \overline{\phi(0)}} C \phi(0) \right) \\
&= q \cdot \left(\inner{p, p} C \overline{p} + \inner{p, \overline{p}} C p + \inner{\overline{p}, p} C p \right)  \\&= \frac{4(1 + i \gamma)}{1 + K \tau e^{i \beta}}.
\end{align*}
Taking real parts yields
\begin{align*}
\re c = \frac{4(1 + K \tau \left( \cos (\beta - \phi) + \gamma \sin \beta \right)}{\left| 1 + K \tau e^{i \beta} \right|^2}. 
\end{align*}
Let us now approach the point $(\lambda, \tau) = (0, 2\pi)$ over the extended Pyragas curve from the left. We find as in the proof of Lemma \ref{lem: conditions hopf over pyragas} that 
\begin{align*}
\re (q \cdot \Delta_2 (i, 0) p) = - \frac{1 + 2 \pi K (\cos \beta + \gamma \sin \beta)}{\left| 1 + K 2 \pi e^{i \beta} \right|^2}.
\end{align*}
It follows that $\mu_2 = -4$. 

Similarly, if we approach the point $(\lambda, \tau) = (0, 2 \pi)$ over the extended Pyragas curve from the right, we find as in the proof of Lemma \ref{lem: conditions hopf over pyragas} that 
\begin{align*}
\re (q \cdot \Delta_2 (i, 0) p) =  \frac{1 + 2 \pi K (\cos \beta + \gamma \sin \beta)}{\left| 1 + K 2 \pi e^{i \beta} \right|^2}.
\end{align*}
Combining this with the value of $\re c$, we find that $\mu_2 = 4$. 
\end{proof}

We are now able to determine for which parameter values \eqref{eq: periodic orbit} is (un)stable as a solution of \eqref{eq: normal form}. 

\begin{cor} \label{cor: stabiel of instabiele oplossing}
Let $1 + 2 \pi K e^{i \beta} \neq 0$. If 
\begin{align}
 1 + 2 \pi K \left[ \cos \beta + \gamma \sin \beta\right] > 0 \label{eq: rechts instabiel fixed point}
\end{align}
then for small $\lambda$, \eqref{eq: periodic orbit} is an unstable periodic solution of \eqref{eq: normal form}.  Furthermore, if for $\lambda = 0, \tau = 2 \pi$ no roots of the characterstic equation $\det \Delta (\mu) = 0$ with $\Delta(\mu) $ as in \eqref{eq: characteristic equation criticality} are in the right half of the complex plane and 
\begin{align}
 1 + 2 \pi K \left[ \cos \beta + \gamma \sin \beta\right] < 0 \label{eq: links instabiel fixed point}
\end{align}
then for small $\lambda$, \eqref{eq: periodic orbit} is a stable periodic solution of \eqref{eq: normal form}.   
\end{cor}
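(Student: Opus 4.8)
The plan is to combine Theorem~\ref{lem: conditions hopf over pyragas} (occurrence of the Hopf bifurcation) with Theorem~\ref{lem: curve is extended Pyragas curve from the left} (its direction) and feed the outcome into the sub-/supercritical dichotomy recalled at the beginning of this section, using that along the Pyragas curve of Definition~\ref{def: pyragas curve} the orbit \po is a genuine solution of \nf. The observation I would start from is that for $\lambda<0$ close to $0$ the extended Pyragas curve coincides with the Pyragas curve, and that along it \po has amplitude $\sqrt{-\lambda}\to0$ and period $2\pi/(1-\gamma\lambda)\to2\pi$ as $\lambda\uparrow0$; thus \po is a small-amplitude periodic solution of \nf emanating from the origin at $(\lambda,\tau)=(0,2\pi)$, and the uniqueness clause of the Hopf bifurcation theorem forces it to coincide with the branch produced whenever a Hopf bifurcation at $(0,2\pi)$ occurs. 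It then remains to read off from the sign of $\mu_2$ on which side of the bifurcation point that branch sits, to check that this side is indeed $\lambda<0$, and to translate the direction of the bifurcation into (in)stability of the orbit.

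For the unstable conclusion I would assume \eqref{eq: rechts instabiel fixed point}. Then $1+2\pi K(\cos\beta+\gamma\sin\beta)>0$, so Theorem~\ref{lem: conditions hopf over pyragas} yields a Hopf bifurcation at $(0,2\pi)$ when this point is approached over the extended Pyragas curve from the left, i.e.\ along the parametrization \eqref{eq: parametrisatie extended pyragas curve links}, whose scalar parameter is $\lambda$ itself and for which $\re(q\cdot D_2\Delta(i,0)p)<0$, so the critical root enters the right half-plane as $\lambda$ increases through $0$. By Theorem~\ref{lem: curve is extended Pyragas curve from the left}, $\mu_2=-4<0$: the bifurcation is subcritical, hence the bifurcating branch exists for $\lambda<0$, which is exactly where \po lives, and by uniqueness that branch is \po. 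Subcriticality then makes \po unstable: either a non-critical characteristic root already lies in the right half-plane at $(0,2\pi)$, giving \po a Floquet multiplier outside the unit circle, or every non-critical root lies in the left half-plane and then $\mu_2<0$ renders the non-trivial Floquet exponent of \po positive. Either way \po is unstable for small $\lambda$, and no extra hypothesis is needed.

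For the stable conclusion I would assume \eqref{eq: links instabiel fixed point} together with the hypothesis that $\det\Delta(\mu)=0$, with $\Delta$ as in \eqref{eq: characteristic equation criticality}, has no root in the right half-plane for $(\lambda,\tau)=(0,2\pi)$. Then $1+2\pi K(\cos\beta+\gamma\sin\beta)<0$, so Theorem~\ref{lem: conditions hopf over pyragas} yields a Hopf bifurcation at $(0,2\pi)$ when it is approached over the extended Pyragas curve from the right, i.e.\ along \eqref{eq: parametrisatie extended pyragas curve links} with $\theta\mapsto-\theta$, whose scalar parameter $\theta$ satisfies $\lambda=-\theta$ and for which $\re(q\cdot D\tilde\Delta_2(i,0)p)<0$, so the critical root enters the right half-plane as $\theta$ increases, i.e.\ as $\lambda$ decreases through $0$. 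By Theorem~\ref{lem: curve is extended Pyragas curve from the left}, $\mu_2=4>0$: the bifurcation is supercritical, so the branch exists for $\theta>0$, i.e.\ for $\lambda<0$, and is again \po. Since the proof of Theorem~\ref{lem: conditions hopf over pyragas} shows that $\mu=i$ (and hence $\mu=-i$) are the only characteristic roots on the imaginary axis at $(0,2\pi)$, the extra hypothesis pushes all remaining roots there into the open left half-plane, and continuity of the characteristic roots keeps them there along the curve; thus all Floquet multipliers of \po other than the trivial one lie inside the unit circle — the non-trivial one because $\mu_2>0$, the rest by the preceding remark — and \po is stable for small $\lambda$.

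The step I expect to be the main obstacle is the bookkeeping underlying the two preceding paragraphs: matching the side of $(0,2\pi)$ on which the Hopf branch appears with the region $\lambda<0$ where \po exists — this is precisely why the opposite sign conditions \eqref{eq: rechts instabiel fixed point} and \eqref{eq: links instabiel fixed point} dictate approaching the bifurcation point from opposite sides — and, in the stable case, upgrading the direction coefficient $\mu_2$ to genuine orbital asymptotic stability, which forces one to control the full countable Floquet spectrum and hence to invoke the hypothesis that no characteristic roots sit in the right half-plane at $(0,2\pi)$. The unstable case carries no such hypothesis because a subcritical Hopf bifurcation always produces an unstable orbit.
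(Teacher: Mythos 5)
Your proposal is correct and follows essentially the same route as the paper's own (first) proof: occurrence of the Hopf bifurcation at $(0,2\pi)$ via Theorem~\ref{lem: conditions hopf over pyragas}, its direction via Theorem~\ref{lem: curve is extended Pyragas curve from the left} combined with Theorem~\ref{thm: direction of the Hopf bifurcation}, and identification of the bifurcating branch with \po through the uniqueness clause of the Hopf bifurcation theorem. The extra detail you supply on the Floquet spectrum in the stable case is consistent with, and slightly more explicit than, the paper's argument.
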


\begin{proof}
If \eqref{eq: rechts instabiel fixed point} is satisfied, then Lemma \ref{lem: conditions hopf over pyragas} shows that we find a Hopf bifurcation at the point $(\lambda, \tau) = (0, 2 \pi)$ if we approach this point over the extended Pyragas curve from the left. Combining Lemma \ref{lem: curve is extended Pyragas curve from the left} with Theorem \ref{thm: direction of the Hopf bifurcation}, we find that this Hopf bifurcation is subcritical. Thus, there exists an unstable periodic solution for parameter values $(\lambda, \tau)$ on the (extended) Pyragas curve to the left of the point $(0, 2 \pi)$. By the Hopf bifurcation theorem, the periodic solution for these parameter values is unique. By definition of the Pyragas curve, \eqref{eq: periodic orbit} is a periodic solution of \eqref{eq: normal form} for $(\lambda, \tau)$ near $(0, 2 \pi)$, i.e. this is the periodic solution generated by the Hopf bifurcation. We conclude that for $(\lambda, \tau)$ on the Pyragas curve near $(0, 2 \pi)$, \eqref{eq: periodic orbit} is an unstable periodic solution of \eqref{eq: normal form}. 

If \eqref{eq: links instabiel fixed point} is satisfied, we have by Lemma \ref{lem: conditions hopf over pyragas} that we find a Hopf bifurcation at the point $(\lambda, \tau) = (0, 2 \pi)$ if we approach this point over the extended Pyragas curve from the right. Combining Lemma \ref{lem: curve is extended Pyragas curve from the left} with Theorem \ref{thm: direction of the Hopf bifurcation}, we find that this Hopf bifurcation is supercritical. 

Therefore, we find an unique, stable periodic solution of \eqref{eq: normal form} for $(\lambda, \tau)$ on the Pyragas curve near $(0, 2 \pi)$. Since \eqref{eq: periodic orbit} is a periodic solution of \eqref{eq: normal form} for $(\lambda, \tau)$ on the Pyragas curve, we conclude that for $(\lambda, \tau)$ on the Pyragas curve near $(0, 2 \pi)$, this solution is in fact stable if for $\lambda = 0, \tau = 2 \pi$ no roots of the characterstic equation are in the right half of the complex plane. 
\end{proof}

Recall that in Section \ref{sec: uncontrolled system} we determined the direction of Hopf bifurcation when we vary $\lambda$. A similar approach can be followed for the controlled system \nf to give an alternative proof of Corollary \ref{cor: stabiel of instabiele oplossing} using Lemma \ref{lem: direction hopf bifurcation by roots of characteristic equation}.

\begin{proof} \emph{(of Corollary \ref{cor: stabiel of instabiele oplossing})}
The characteristic function corresponding to the linearization of \nf around $z= 0$ is given by
\begin{align} \label{eq: ce complexwaardig}
\Delta(\mu) = \mu - (\lambda + i) + Ke^{i \beta} \left[1 - e^{- \mu \tau}\right]
\end{align}
We recall from the proof of Lemma \ref{lem: conditions hopf over pyragas} that for $\lambda= 0$, $\mu = i$ is a root of \eqref{eq: ce complexwaardig} and that there are no other roots on the imaginary axis. Furthermore, if $1 + 2 \pi K e^{i \beta} \neq 0$, then $\mu = i$ has multiplicity one as a solution of $\Delta(\mu) = 0$. Therefore, if $\mu = i$ crosses the imaginary axis with non--zero speed as we cross the point $(\lambda, \tau) = (0, 2 \pi)$ over the Pyragas curve, a Hopf bifurcation of the origin occurs for $\lambda = 0$. 

Parametrize the Pyragas curve as in \eqref{eq: parametrisatie extended pyragas curve links} and, for small $\theta$, $\mu = \mu(\theta)$ for the root satisfying $\Delta(\mu(\theta))=0$ for $\lambda = \lambda(\theta)$, and $\tau = \tau(\theta)$ as in \eqref{eq: parametrisatie extended pyragas curve links} with $\mu(0)= i$. Differentiation of \eqref{eq: ce complexwaardig} gives that
\begin{align*}
0 = \left. \frac{d \mu}{d \theta}\right|_{\theta = 0} - 1 + K e^{i \beta} \left( \left. \frac{d \mu}{d \theta}\right|_{\theta = 0} 2 \pi + 2 \pi \gamma i\right) 
\end{align*}
which we can rewrite as
\begin{align*}
 \left. \frac{d \mu}{d \theta}\right|_{\theta = 0} \left(1 + 2 \pi Ke^{i \beta}\right)= 1 - 2 \pi \gamma i Ke^{i \beta}
\end{align*}
which gives
\begin{align*}
 \left. \frac{d \mu}{d \theta}\right|_{\theta = 0} &= \frac{1}{\left|1 + 2 \pi Ke^{i \beta}\right|^2 } \left(1 - 2 \pi \gamma i Ke^{i \beta} \right) \left(1 + 2 \pi K e^{-i \beta} \right) \\
 & =  \frac{1}{\left|1 + 2 \pi Ke^{i \beta}\right|^2 } \left(1 + 2 \pi K e^{- i \beta} - 2 \pi \gamma i Ke^{i \beta} - 4 \pi^2 \gamma K^2 i \right)
\end{align*}
Taking real parts yields
\begin{align*}
 \left. \frac{d \re \mu}{d \theta}\right|_{\theta = 0} = \re  \left. \frac{d \mu}{d \theta}\right|_{\theta = 0} = \frac{1}{\left|1 + 2 \pi Ke^{i \beta}\right|^2 } \left(1 + 2 \pi K \cos \beta + 2 \pi \gamma K \sin \beta \right)
\end{align*}
In particular, if $1 + 2 \pi K (\cos \beta + \gamma \sin \beta) \neq 0$, then the root $\mu = i$ that exists for $(\lambda, \tau) = (0, 2\pi)$ crosses the imaginary axis with non--zero speed as we cross the point $(\lambda, \tau) = (0, 2 \pi)$ over the Pyragas curve. This shows that there is a Hopf bifurcation at the origin. An application of Lemma \ref{lem: direction hopf bifurcation by roots of characteristic equation} now yields the result. 
\end{proof} 

We remark that this alternative proof of Corollary \ref{cor: stabiel of instabiele oplossing} exploits the fact that the extended Pyragas curve is defined in such a way that we a priori know for which points on the curve a periodic solution of the system \nf exists. We will us this observation again in Section \ref{sec: afgeleide control} when we introduce a variation of Pyragas control scheme to system \eqref{eq: uncontrolled normal form}.

\section{Hopf bifurcation and dynamics of the controlled system} \label{sec: lambda varieren}
In the previous section, we approached the Hopf bifurcation point $(\lambda, \tau) = (0, 2\pi)$ over the extended Pyragas curve. As remarked before, there are of course many different ways to approach this bifurcation point. In this section, we approach the bifurcation point parallel to the $\lambda$-axis, as was done in \cite{oddnumberlimitation}. This again enables us to determine stability conditions for \eqref{eq: periodic orbit} as a solution of \eqref{eq: normal form} and gives us more insight in the dynamics of the controlled system.  

Using Theorem \ref{thm: occurence of the Hopf bifurcation}, we can determine conditions for a Hopf bifurcation of system \eqref{eq: normal form} to occur if we vary $\lambda$ and leave all the other parameters fixed. We state the following Lemma without proof: 

\begin{lemma} \label{lem: hopf bifurcation curve}
Let us consider the system \eqref{eq: normal form} where we leave all parameters but $\lambda$ fixed. Let $(\lambda, \tau) \neq (0, 0)$ be such that
\begin{align}
\lambda &= K \left[ \cos \beta - \cos (\beta - \phi) \right] \label{eq: lambda hopf} \\ 
\tau &= \frac{\phi}{1 - K \left[\sin \beta - \sin(\beta - \phi) \right]} \label{eq: tau hopf}
\end{align}
for some $\phi \in \mathbb{R}\backslash \{0\}$. Furthermore, assume that
\begin{align}
1 + K \tau e^{i ( \beta - \phi)} &\neq 0 \label{eq: voorwaarden hopf enkele wortel}  \\ 
1 + K \tau \cos (\beta - \phi) &> 0 \label{eq: voorwaarden hopf afgeleide ongelijk nul}
\end{align} 
Then a Hopf bifurcation of the origin of system \eqref{eq: normal form} occurs.  
\end{lemma}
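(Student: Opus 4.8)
The plan is to apply Theorem \ref{thm: occurence of the Hopf bifurcation} directly to the system \eqref{eq: normal form op R^2}, with $\mu = \lambda$ playing the role of the scalar bifurcation parameter and all of $K, \beta, \tau$ held fixed. The first step is to verify the hypothesis that the characteristic matrix $\Delta(z,\lambda)$ from \eqref{eq: characteristic equation criticality} has a purely imaginary root at the candidate bifurcation value. Writing $\mu = i\omega$ with $\omega \neq 0$ in $\det \Delta(i\omega, \lambda) = 0$ and separating real and imaginary parts, one obtains a pair of real equations; introducing the phase variable $\phi := \omega\tau$ and solving, the system collapses to exactly the relations \eqref{eq: lambda hopf}--\eqref{eq: tau hopf}. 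So for any $\phi \in \mathbb{R}\setminus\{0\}$, the point $(\lambda, \tau)$ defined by \eqref{eq: lambda hopf}--\eqref{eq: tau hopf} carries a root $i\omega$ with $\omega = \phi/\tau$, and one should record the corresponding frequency $\omega_0$ explicitly.

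Next I would identify the null vectors $p, q$ of $\Delta(i\omega_0, \lambda)$ and $\Delta(i\omega_0,\lambda)^T$. Because the linear part of \eqref{eq: normal form op R^2} (including the delayed term) is built entirely from rotation-type matrices $\begin{pmatrix} a & -b \\ b & a\end{pmatrix}$, which act on $\mathbb{C}^2$ as multiplication by $a + ib$ in the eigenbasis $(1, \mp i)^T$, the vectors $p = (1, -i)^T$ and $q = \alpha(1, i)^T$ work just as in the proof of Theorem \ref{lem: conditions hopf over pyragas}; the normalization $q \cdot D_1\Delta(i\omega_0,\lambda)p = 1$ fixes $\alpha$ and produces a denominator of the form $1 + K\tau e^{i(\beta - \phi)}$ (after using $e^{-i\omega_0\tau} = e^{-i\phi}$), which is why hypothesis \eqref{eq: voorwaarden hopf enkele wortel} is needed — it guarantees $\alpha$ is well defined and, together with a short argument, that $i\omega_0$ is a simple root. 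Then I would compute $D_2\Delta(i\omega_0, \lambda)$, i.e. the $\lambda$-derivative of \eqref{eq: characteristic equation criticality}; the only subtlety is that $\tau$ is being held fixed while $\lambda$ varies (unlike in Section \ref{sec: pyragas curve}), so the derivative is simply $-I$ from the explicit $\lambda$ in the linear matrix, with no contribution from the exponential. Pairing with $p$ and $q$ gives $q \cdot D_2\Delta(i\omega_0,\lambda)p = -2\alpha = -1/(1 + K\tau e^{i(\beta - \phi)})$, whence $\re(q \cdot D_2\Delta(i\omega_0,\lambda)p) = -(1 + K\tau\cos(\beta - \phi))/|1 + K\tau e^{i(\beta-\phi)}|^2$, which is negative precisely under hypothesis \eqref{eq: voorwaarden hopf afgeleide ongelijk nul}.

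Finally I would dispatch the remaining technical condition of Theorem \ref{thm: occurence of the Hopf bifurcation}: that no other root of $\Delta(z, \lambda)$ lies in $i\omega_0\mathbb{Z}$. Since \eqref{eq: normal form} is a scalar complex equation, its characteristic function factors as $\Delta(\mu) = \mu - (\lambda + i) + Ke^{i\beta}(1 - e^{-\mu\tau})$ together with its complex conjugate (cf.\ \eqref{eq: ce complexwaardig}); checking that $in\omega_0$ for integer $n \neq 1$ is not a root of either factor is a direct substitution, and one expects it to hold generically — possibly after a mild genericity remark, since in principle a bad choice of $\phi$ could create a resonance. I expect this last verification to be the main obstacle, not because it is deep but because it is the one place where the simple two-equation reduction of the first step does not immediately hand over the answer; everything else is a transcription of the computations already carried out in Section \ref{sec: pyragas curve} with the Pyragas-curve parametrization replaced by "$\lambda$ varies, $\tau$ fixed." Once all hypotheses are in place, Theorem \ref{thm: occurence of the Hopf bifurcation} yields the Hopf bifurcation of the origin, proving the lemma.
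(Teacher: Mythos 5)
Your proposal is correct and is exactly the route the paper intends: the paper states this lemma without proof as an application of Theorem \ref{thm: occurence of the Hopf bifurcation}, and every computation you sketch ($p=(1,-i)^T$, $q=\alpha(1,i)^T$ with $\alpha = 1/(2(1+K\tau e^{i(\beta-\phi)}))$, $D_2\Delta = -I$ since $\tau$ is held fixed, and $\re(q\cdot D_2\Delta\, p) = -(1+K\tau\cos(\beta-\phi))/|1+K\tau e^{i(\beta-\phi)}|^2$) appears verbatim in the paper's proof of Theorem \ref{lem: coefficient parallel aan lambda as}. Your flag about the non-resonance condition (no other roots in $i\omega_0\mathbb{Z}$) is a fair and honest observation: the paper never verifies it for general $\phi$ and the lemma as stated implicitly assumes it, so noting that it holds only generically is, if anything, more careful than the source.
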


As in \cite{oddnumberlimitation}, we define the \emph{Hopf bifurcation curve} as the curve in $(\lambda, \tau)$-parameter space parametrized by \eqref{eq: lambda hopf}--\eqref{eq: tau hopf} for $\phi \in \mathbb{R}$. We note that the Pyragas curve (see Definition  \ref{def: pyragas curve}) ends on the Hopf bifurcation point at $(\lambda, \tau) = (0, 2 \pi)$. We can now try to choose the parameters in such a way that the periodic solution \eqref{eq: periodic orbit} of \eqref{eq: normal form} emmanates from a supercritical Hopf bifurcation; then \eqref{eq: periodic orbit} is a stable solution of \eqref{eq: normal form} for parameter values near the bifurcation point.

In \cite{oddnumberlimitation}, the direction of the Hopf bifurcation was determined using a normal form reduction. Here, we rederive this result directly as an application of Theorem \ref{thm: direction of the Hopf bifurcation}. 

\begin{figure}
\centering
\includegraphics[width = 0.5\textwidth]{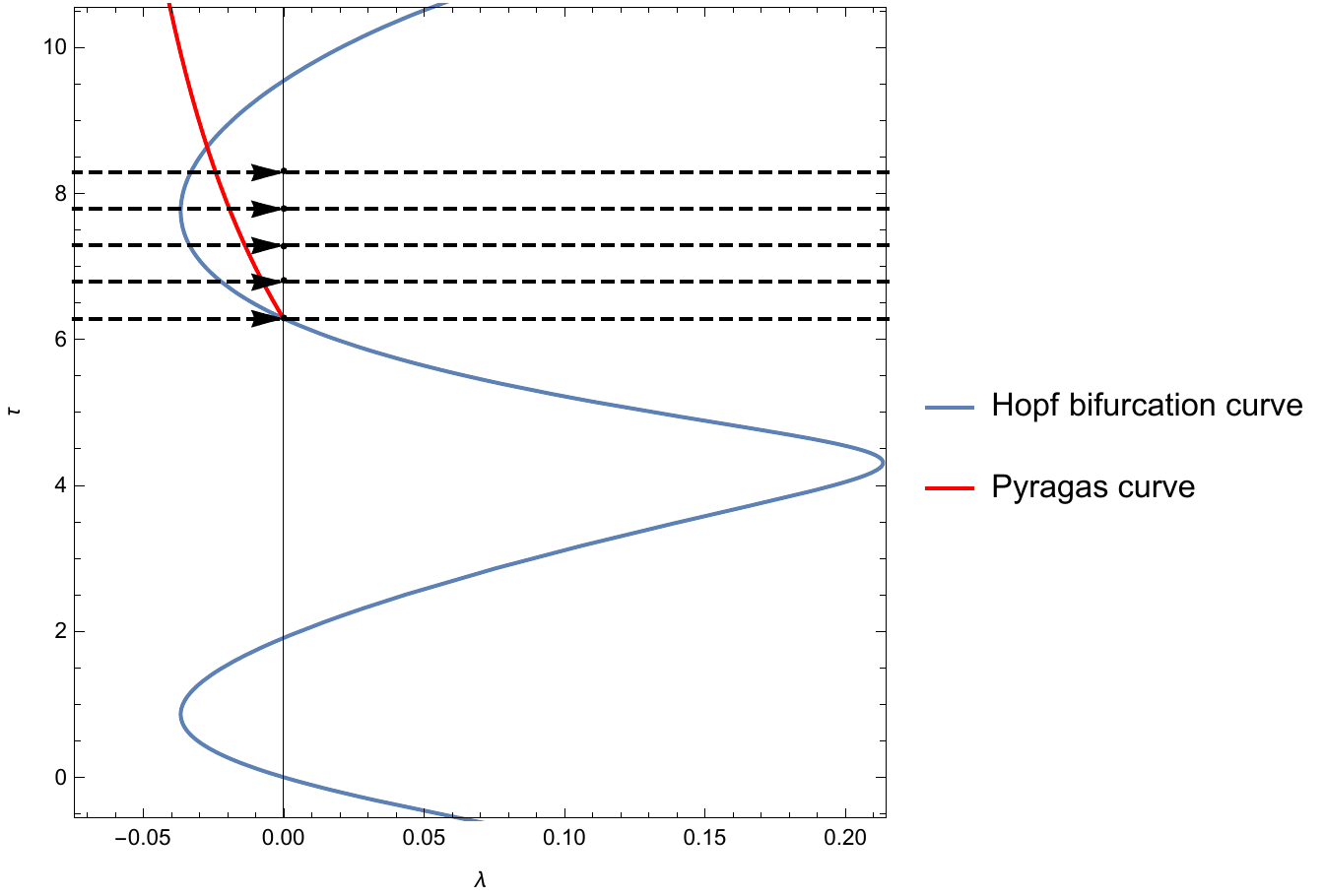}
\caption{Approaching the Hopf bifurcation points parallel to the $\lambda$-axis.}
\end{figure}

\begin{theorem} \label{lem: coefficient parallel aan lambda as}
Let $(\lambda, \tau)$ be a point on the Hopf bifurcation curve and let $\phi \in \mathbb{R}\backslash \{0\}$ satisfy \eqref{eq: lambda hopf}-\eqref{eq: tau hopf}. If $\lambda$ varies while all other parameters remain fixed, then the value of $\mu_2$ as defined in \eqref{eq: derde orde hopf coefficient} is given by 
\begin{align}
\mu_2 = -\frac{4(1 + K \tau \left( \cos (\beta - \phi) + \gamma \sin (\beta - \phi) \right)}{1 + K \tau \cos (\beta - \phi)} \label{eq: mu2 parallel aan lambda as}
\end{align}
\end{theorem}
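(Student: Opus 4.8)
The plan is to apply Theorem~\ref{thm: direction of the Hopf bifurcation} directly, reusing as much of the machinery from the proof of Theorem~\ref{lem: conditions hopf over pyragas} as possible. First I would identify the Hopf data at the point $(\lambda, \tau)$ on the Hopf bifurcation curve: by Lemma~\ref{lem: hopf bifurcation curve} and its parametrization \eqref{eq: lambda hopf}--\eqref{eq: tau hopf}, the characteristic matrix \eqref{eq: characteristic equation criticality} has $\mu = i$ as a simple root with no other roots in $i\mathbb{Z}$, so $\omega_0 = 1$. The linear part of \eqref{eq: normal form op R^2} at this point is (up to the $-K\cos\beta$, $K\sin\beta$ corrections) a rotation-type matrix, and the eigenvector computation gives the same $p = (1, -i)^T$ and $q = \alpha(1, i)^T$ as before, but now the normalization constant $\alpha$ must be recomputed: $D_1\Delta(i,\lambda) = I + K\tau e^{-i\phi}R(\beta)$ where $R(\beta)$ is the rotation matrix, so $q\cdot D_1\Delta(i,\lambda)p = 2\alpha(1 + K\tau e^{i(\beta-\phi)})$ and hence $\alpha = 1/(2(1 + K\tau e^{i(\beta-\phi)}))$. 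This is why condition \eqref{eq: voorwaarden hopf enkele wortel} appears.

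Next I would compute the two ingredients of $\mu_2$. For $\re(q\cdot D_2\Delta(i,\lambda)p)$: here $D_2$ means the derivative in $\lambda$ along the Hopf curve, i.e.\ differentiating $\Delta(i, \lambda(\phi), \tau(\phi))$ — but since we vary $\lambda$ with all other parameters fixed, this is cleaner than the Pyragas-curve case; differentiating \eqref{eq: characteristic equation criticality} in $\lambda$ gives $D_2\Delta = -I + K\tau' e^{-i\tau}R(\beta)(\ldots)$ type terms, and after sandwiching with $p, q$ and taking real parts one should land on an expression whose numerator is essentially $1 + K\tau\cos(\beta-\phi)$, matching the denominator of \eqref{eq: mu2 parallel aan lambda as} (and explaining condition \eqref{eq: voorwaarden hopf afgeleide ongelijk nul}, which guarantees this is negative, consistent with Theorem~\ref{thm: occurence of the Hopf bifurcation}). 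For $\re(c)$: by \eqref{eq: waarden afgeleiden 1}--\eqref{eq: waarden afgeleiden 2}, $D_1^2 g = 0$, so the last two terms of \eqref{eq: derde orde hopf coefficient c} vanish and only the cubic term survives. The computation $c = q\cdot(\inner{p,p}C\bar p + \inner{p,\bar p}Cp + \inner{\bar p, p}Cp)$ is exactly as in the proof of Theorem~\ref{lem: curve is extended Pyragas curve from the left}, except the normalization $\alpha$ is now the $\phi$-dependent one; this should yield $c = 4(1 + i\gamma)/(1 + K\tau e^{i(\beta-\phi)})$ and hence $\re(c) = 4(1 + K\tau(\cos(\beta-\phi) + \gamma\sin(\beta-\phi)))/|1 + K\tau e^{i(\beta-\phi)}|^2$.

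Finally, forming the quotient \eqref{eq: derde orde hopf coefficient}, the factors $|1 + K\tau e^{i(\beta-\phi)}|^{-2}$ cancel between numerator and denominator, and a sign flip from the $-I$ in $D_2\Delta$ produces the overall minus sign, giving precisely \eqref{eq: mu2 parallel aan lambda as}. The main obstacle I anticipate is the careful bookkeeping in the $D_2\Delta$ computation: one must correctly differentiate both the explicit $\lambda$-dependence in the linear matrix and the implicit dependence through $\tau(\phi)$ (if the curve is parametrized so that $\tau$ also moves), track the factor $e^{-i\tau} = e^{-i\phi/(1 - K[\sin\beta - \sin(\beta-\phi)])}$ and its derivative, and verify that all the messy trigonometric terms collapse to the clean form stated. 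A useful consistency check is that setting $\beta \to \beta$, $\phi \to 2\pi$, $\tau \to 2\pi$, $\lambda \to 0$ (the Pyragas-curve endpoint) should recover $\mu_2 = -4$ from Theorem~\ref{lem: curve is extended Pyragas curve from the left}, since there $\cos(\beta - 2\pi) = \cos\beta$ and $\sin(\beta-2\pi) = \sin\beta$, so \eqref{eq: mu2 parallel aan lambda as} becomes $-4(1 + 2\pi K(\cos\beta + \gamma\sin\beta))/(1 + 2\pi K\cos\beta)$ — which does \emph{not} equal $-4$ in general, signaling that the two curves genuinely give different $\mu_2$ values and the approach direction matters, exactly as the paper emphasizes.
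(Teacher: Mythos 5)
Your proposal is correct and follows essentially the same route as the paper's proof: compute $p$, $q$ with the $\phi$-dependent normalization $\alpha = 1/(2(1+K\tau e^{i(\beta-\phi)}))$, observe that only the cubic term of $c$ survives since $D_1^2 g = 0$, and form the quotient \eqref{eq: derde orde hopf coefficient}. The only point to tighten is your hedging about $D_2\Delta$: since $\tau$ is held fixed one has exactly $D_2\Delta = -I$ (no $\tau'$ terms), which immediately gives $\re(q\cdot D_2\Delta\, p) = -(1+K\tau\cos(\beta-\phi))/\lvert 1+K\tau e^{i(\beta-\phi)}\rvert^2$ as in the paper, and your closing observation that the $\mu_2$ value differs from the Pyragas-curve value $-4$ is the intended conclusion, not an inconsistency.
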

\begin{proof}
We first calculate $p, q$ as defined in \eqref{eq: definitie p, q, phi}. Set 
\begin{align}
A = \begin{pmatrix}
\lambda - K \cos \beta & -1 + K \sin \beta  \\
1 - K \sin \beta & \lambda  - K \cos \beta
\end{pmatrix}, \quad B = K \begin{pmatrix}
 \cos \beta & -  \sin \beta \\
 \sin \beta & \cos \beta
\end{pmatrix}  \label{eq: matrices characteristic equation}
\end{align}
We recall that if $(\lambda, \tau)$ lies on the Hopf bifurcation curve, then there exsists an $\omega \in \mathbb{R}$ satisfying $\phi = \omega \tau$ such that $\Delta(i \omega, \lambda) = 0$. By definition, $p \in \mathbb{C}^2$ satisfies $\Delta(i \omega, \lambda, \tau)p = 0$ (see \eqref{eq: definitie p, q, phi}). A similar computation as in the proof of Lemma \ref{lem: conditions hopf over pyragas} yields 
\begin{align*}
p = \begin{pmatrix}
1 \\ - i 
\end{pmatrix}, \quad
q  = \frac{1}{2 (1 + K \tau e^{i(\beta- \phi)})} \begin{pmatrix}
1 \\ i
\end{pmatrix}.
\end{align*}
Using \eqref{eq: characteristic equation criticality}, we obtain
\begin{align*}
D_2 \Delta(i \omega_0, \lambda, \tau) = - I
\end{align*}
which gives
\begin{align*}
q \cdot D_2 \Delta(i \omega_0, \lambda) p &= - q \cdot p  \\
&= - \frac{1}{1 + K \tau e^{i (\beta - \phi)} }.
\end{align*}
Taking the real part yields 
\begin{align*}
\re (q \cdot D_2 \Delta(i \omega_0, \lambda) p) = -\frac{1 + K \tau \cos (\beta - \phi)}{\left|1 + K \tau e^{i (\beta - \phi)} \right|^2}
\end{align*}

Using \eqref{eq: waarden afgeleiden 1} -- \eqref{eq: waarden afgeleiden 2}, we can now explicitly compute $c$:
\begin{align*}
c &= \frac{1}{2}  q \cdot D_1^3 g(0, \lambda)(\phi, \phi, \overline{\phi}) + 0 + 0\\ &= \frac{1}{2} q \cdot \left(2 \inner{\phi(0), \phi(0)} C \overline{\phi(0)} + 2 \inner{\overline{\phi(0)}, \phi(0)} C \phi(0) + 2 \inner{\phi(0), \overline{\phi(0)}} C \phi(0) \right) \\
&= q \cdot \left(\inner{p, p} C \overline{p} + \inner{p, \overline{p}} C p + \inner{\overline{p}, p} C p \right) \\ &= \frac{4(1 + i \gamma)}{1 + K \tau e^{i( \beta - \phi)}}
\end{align*}
Thus we find
\begin{align*}
\re c = \frac{4(1 + K \tau \left( \cos (\beta - \phi) + \gamma \sin (\beta - \phi) \right)}{\left| 1 + K \tau e^{i(\beta - \phi)} \right|^2}.
\end{align*}
Using the definition of $\mu_2$ as in Theorem \ref{thm: direction of the Hopf bifurcation}, we arrive at equation \eqref{eq: mu2 parallel aan lambda as}. This completes the proof. 
\end{proof}

We are also able to determine the direction of the Hopf bifurcation for parameter values $(\lambda, \tau)$ for which a Hopf bifurcation of the origin of system \eqref{eq: normal form} occurs; cf. eq. (8) in \cite{oddnumberlimitation}. 

\begin{cor} \label{cor: directie lambda parameter}
Let $(\lambda, \tau)$ be such that a Hopf bifurcation of the origin of system \eqref{eq: normal form} occurs, i.e., let the conditions of Theorem \ref{lem: hopf bifurcation curve} be satisfied for some $\phi \in \mathbb{R}\backslash \{0\}$. If  
\begin{align}
1 + K \tau \left[ \cos (\beta - \phi) + \gamma \sin(\beta - \phi) \right] > 0 \label{eq: lambda varieren subcritical}
\end{align}
then the Hopf bifurcation at $(\lambda, \tau)$ is subcritical. If 
\begin{align}
1 + K \tau \left[ \cos (\beta - \phi) + \gamma \sin(\beta - \phi) \right] < 0 \label{eq: lambda varieren supercritical}
\end{align}
the Hopf bifurcation at $(\lambda, \tau)$ is supercritical. 
\end{cor}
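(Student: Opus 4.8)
The plan is to derive Corollary \ref{cor: directie lambda parameter} directly from Theorem \ref{lem: coefficient parallel aan lambda as} together with Theorem \ref{thm: direction of the Hopf bifurcation}. Under the hypotheses, the conditions of Theorem \ref{lem: hopf bifurcation curve} hold for some $\phi \in \mathbb{R}\backslash\{0\}$; in particular \eqref{eq: voorwaarden hopf afgeleide ongelijk nul} gives that $1 + K\tau\cos(\beta-\phi) > 0$. Thus the point $(\lambda,\tau)$ lies on the Hopf bifurcation curve and Theorem \ref{lem: coefficient parallel aan lambda as} applies, yielding the explicit formula
\begin{align*}
\mu_2 = -\frac{4\bigl(1 + K\tau(\cos(\beta-\phi) + \gamma\sin(\beta-\phi))\bigr)}{1 + K\tau\cos(\beta-\phi)}.
\end{align*}

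The next step is simply to read off the sign of $\mu_2$. Since the denominator $1 + K\tau\cos(\beta-\phi)$ is strictly positive by \eqref{eq: voorwaarden hopf afgeleide ongelijk nul}, and the leading factor $-4$ is negative, the sign of $\mu_2$ is the opposite of the sign of the numerator $1 + K\tau(\cos(\beta-\phi) + \gamma\sin(\beta-\phi))$. Hence, if \eqref{eq: lambda varieren subcritical} holds, then $\mu_2 < 0$; and if \eqref{eq: lambda varieren supercritical} holds, then $\mu_2 > 0$.

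Finally I would invoke Theorem \ref{thm: direction of the Hopf bifurcation}, which states that $\mu_2 < 0$ corresponds to a subcritical Hopf bifurcation and $\mu_2 > 0$ to a supercritical one. Combining this with the sign analysis above gives exactly the two implications asserted in the corollary. There is no real obstacle here: the entire content has been front-loaded into Theorem \ref{lem: coefficient parallel aan lambda as}, and the only thing to be slightly careful about is confirming that the denominator's positivity (needed to conclude the sign, and implicitly needed for the formula to make sense) is precisely guaranteed by the standing assumption \eqref{eq: voorwaarden hopf afgeleide ongelijk nul} of Theorem \ref{lem: hopf bifurcation curve}, which is part of the corollary's hypotheses. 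The proof is therefore a two-line deduction.
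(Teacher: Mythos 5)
Your proposal is correct and follows exactly the same route as the paper's own proof: invoke Theorem \ref{lem: coefficient parallel aan lambda as} for the explicit formula for $\mu_2$, note that \eqref{eq: voorwaarden hopf afgeleide ongelijk nul} makes the denominator $1 + K\tau\cos(\beta-\phi)$ strictly positive so the sign of $\mu_2$ is opposite to that of the numerator, and conclude via Theorem \ref{thm: direction of the Hopf bifurcation}. No gaps; this matches the paper's argument step for step.
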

\begin{proof}
If the conditions of Theorem \ref{lem: hopf bifurcation curve} are satisfied, then \eqref{eq: voorwaarden hopf afgeleide ongelijk nul} holds and
\begin{align*}
1 + K \tau \cos (\beta - \phi) > 0.
\end{align*}
Combining this inequality with Theorem \ref{lem: coefficient parallel aan lambda as}, we find that $\mu_2 < 0$ if \eqref{eq: lambda varieren subcritical} holds. Using Theorem \ref{thm: direction of the Hopf bifurcation} this shows that the Hopf bifurcation is subcritical. Similarly, if \eqref{eq: lambda varieren supercritical} holds, then $\mu_2 > 0$ and again by Theorem \ref{thm: direction of the Hopf bifurcation} the Hopf bifurcation is supercritical. 
\end{proof}

We can determine the orientation of the Pyragas curve with respect to the Hopf bifurcation curve at the point $(\lambda, \tau) = (0, 2 \pi)$ by computing the slopes of the curves at $(\lambda, \tau) = (0, 2 \pi)$. Combining this with the direction of the Hopf bifurcation curve, we are able to give conditions for \eqref{eq: periodic orbit} to be  (un)stable as a solution of \eqref{eq: normal form}. If the Hopf bifurcation at $(\lambda, \tau) = (0, 2\pi)$ is subcritical and the Pyragas curve is locally to the left of the Hopf bifurcation curve, we expect the periodic solution \eqref{eq: periodic orbit}, that exists for parameter values on the Pyragas curve, to arise from the Hopf bifurcation and therefore be unstable. By an analogous argument, we find that the solution \eqref{eq: periodic orbit} of \eqref{eq: normal form} is stable if the Hopf bifurcation at $(\lambda, \tau) = (0, 2\pi)$ is supercritical and the Pyragas curve is locally to the right of the Hopf bifurcation curve. Following \cite{oddnumberlimitation}, this leads to the following Corollary:

\begin{cor} \label{cor: condities stabiliteit lambda varieren}
Let the parameters $K, \beta, \gamma$ be such that a Hopf bifurcation of system \eqref{eq: normal form} occurs for $(\lambda, \tau) = (0, 2 \pi)$, i.e. let 
\begin{align}
1 + 2 \pi K e^{i \beta} &\neq 0 \label{eq: condities hopf bifurcatie 1} \\ 
1 + 2 \pi K \cos \beta &> 0 \label{eq: condities hopf bifurcatie 2}
\end{align}
If $1 + 2 \pi K \left[ \cos \beta + \gamma \sin \beta \right]< 0$ and the Pyragas curve is locally to the right of the Hopf bifurcation curve, then the periodic solution \eqref{eq: periodic orbit} of \eqref{eq: normal form} is stable for small $\lambda$. If $1 + 2 \pi K \left[ \cos \beta + \gamma \sin \beta \right] >  0$ and the Pyragas curve is locally to the left of the Hopf bifurcation curve, then the periodic solution \eqref{eq: periodic orbit} of \eqref{eq: normal form} is unstable for small $\lambda$. 
\end{cor}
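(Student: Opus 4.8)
The plan is to combine the criticality information from Corollary~\ref{cor: directie lambda parameter} with a local geometric comparison of the Pyragas curve and the Hopf bifurcation curve near $(\lambda, \tau) = (0, 2\pi)$, in the spirit of Theorem~\ref{lem: direction hopf bifurcation by roots of characteristic equation}. First I would observe that for $(\lambda, \tau) = (0, 2\pi)$ the relevant value of $\phi$ is $\phi = \omega_0 \tau = 1 \cdot 2\pi = 2\pi$, so that $\cos(\beta - \phi) = \cos\beta$ and $\sin(\beta - \phi) = \sin\beta$; under this identification the conditions \eqref{eq: condities hopf bifurcatie 1}--\eqref{eq: condities hopf bifurcatie 2} are precisely \eqref{eq: voorwaarden hopf enkele wortel}--\eqref{eq: voorwaarden hopf afgeleide ongelijk nul}, so Lemma~\ref{lem: hopf bifurcation curve} applies and a Hopf bifurcation of the origin of \eqref{eq: normal form} occurs at $(\lambda, \tau) = (0, 2\pi)$. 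Corollary~\ref{cor: directie lambda parameter} then tells us directly that the bifurcation is supercritical when $1 + 2\pi K[\cos\beta + \gamma\sin\beta] < 0$ and subcritical when $1 + 2\pi K[\cos\beta + \gamma\sin\beta] > 0$.

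Next I would use uniqueness from the Hopf bifurcation theorem: the Hopf bifurcation at $(0, 2\pi)$ produces, for parameter values in a one-sided neighbourhood along any transverse curve, a unique small-amplitude periodic orbit. Since by Definition~\ref{def: pyragas curve} the explicit orbit \eqref{eq: periodic orbit} is a periodic solution of \eqref{eq: normal form} for all $(\lambda, \tau)$ on the Pyragas curve (which limits onto $(0, 2\pi)$ as $\lambda \to 0^-$), this explicit orbit must coincide with the one born in the Hopf bifurcation, at least for $|\lambda|$ small. The point is then whether the Pyragas curve exits the bifurcation point on the ``stable'' side of the Hopf curve or not. If the bifurcation is supercritical, the bifurcating orbit is stable on the side of the Hopf curve where the origin is unstable; if the Pyragas curve lies locally on that side, then \eqref{eq: periodic orbit} is stable for small $\lambda$. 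Dually, if the bifurcation is subcritical, the bifurcating orbit is unstable on the side where the origin is stable, and if the Pyragas curve lies locally on that side, \eqref{eq: periodic orbit} is unstable for small $\lambda$. The hypotheses of the corollary pair the sign of $1 + 2\pi K[\cos\beta + \gamma\sin\beta]$ with exactly the required side condition on the Pyragas curve, so the two cases of the statement follow.

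The main obstacle — and the step I would spend the most care on — is making the phrase ``the Pyragas curve is locally to the right (resp. left) of the Hopf bifurcation curve'' interact correctly with ``the side on which the origin is unstable.'' Concretely I would compute the slopes $d\tau/d\lambda$ of both curves at $(0, 2\pi)$: for the Pyragas curve from Definition~\ref{def: pyragas curve} one gets $\tau'(0) = 2\pi\gamma$ directly, and for the Hopf bifurcation curve one differentiates the parametrization \eqref{eq: lambda hopf}--\eqref{eq: tau hopf} with respect to $\phi$ at $\phi = 2\pi$. Comparing these determines which curve is to which side. Simultaneously I must track, via the root-crossing velocity computed in the alternative proof of Corollary~\ref{cor: stabiel of instabiele oplossing} (the sign of $\tfrac{d}{d\theta}\re\mu|_{\theta=0} = \tfrac{1 + 2\pi K(\cos\beta + \gamma\sin\beta)}{|1 + 2\pi K e^{i\beta}|^2}$), on which side of $(0,2\pi)$ the origin of \eqref{eq: normal form} is stable. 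Reconciling the orientation conventions so that ``supercritical $+$ Pyragas curve on the unstable-origin side'' genuinely yields stability of \eqref{eq: periodic orbit} is the delicate bookkeeping; once the signs are lined up, the conclusion is immediate from Corollary~\ref{cor: directie lambda parameter}, Theorem~\ref{thm: direction of the Hopf bifurcation}, and the uniqueness clause of the Hopf bifurcation theorem.
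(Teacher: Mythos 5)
Your proposal is correct and follows essentially the same route as the paper: the paper itself derives this corollary exactly by combining the criticality criterion of Corollary~\ref{cor: directie lambda parameter} (evaluated at $\phi = 2\pi$, so that conditions \eqref{eq: condities hopf bifurcatie 1}--\eqref{eq: condities hopf bifurcatie 2} become \eqref{eq: voorwaarden hopf enkele wortel}--\eqref{eq: voorwaarden hopf afgeleide ongelijk nul}) with the local orientation of the Pyragas curve relative to the Hopf bifurcation curve and the uniqueness of the bifurcating orbit. Your extra care about the sign/side bookkeeping is exactly the step the paper leaves informal, so nothing further is needed.
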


As we have seen in Sections \ref{sec: pyragas curve} -- \ref{sec: lambda varieren}, applying the Hopf bifurcation theorem with respect to different curves yields different results. Comparing Corollary \ref{cor: condities stabiliteit lambda varieren} with Corollary \ref{cor: stabiel of instabiele oplossing}, we see that Corollary \ref{cor: stabiel of instabiele oplossing} gives us weaker conditions for \eqref{eq: periodic orbit} to be (un)stable as a solution of \eqref{eq: normal form} for small $\lambda$. In particular, we can drop the condition \eqref{eq: condities hopf bifurcatie 2} and we no longer have to take the orientation of the Pyragas curve with respect to the Hopf bifurcation curve into account. Using Corollary \ref{cor: stabiel of instabiele oplossing}, we are therefore able to determine upon the (in)stability of the periodic solution \eqref{eq: periodic orbit} of \eqref{eq: normal form} for a wider range of parameter values than if we use Corollary \ref{cor: directie lambda parameter}. 

The approach we have used in Section \ref{sec: lambda varieren} gives more insight in the dynamics of the controlled system \eqref{eq: normal form}. If $1 + K \tau \left[ \cos \beta + \gamma \sin \beta \right] > 0$, then \eqref{eq: lambda varieren subcritical} holds for $\phi$ in a small neighbourhood of $2 \pi$. Applying Corollary \ref{cor: directie lambda parameter}, we find that for parameter values $(\lambda, \tau)$ in a neighbourhood of $(\lambda, \tau) =(0, 2\pi)$ to the left of the Hopf bifurcation curve, a periodic orbit exists. Similarly, if $1 + K \tau \left[ \cos \beta + \gamma \sin \beta \right] < 0$, a periodic orbit exists for all parameter values $(\lambda, \tau)$ in a neighbourhood of $(\lambda, \tau) = (0, 2\pi)$ to the right of the Hopf bifurcation curve. We conclude that by applying Pyragas control, a new set of periodic orbits is created, see also \cite{globalbifurcationanalysis}.

\section{A variation in control term} \label{sec: afgeleide control}
In previous sections, we discussed three different methods to determine the stability of periodic orbit \eqref{eq: periodic orbit} of system \eqref{eq: normal form}. In this section, we return to the general problem of Pyragas control. Let us study the system
\begin{align}
\dot{x}(t) = f(x(t)), \quad x(0) = x_0 \label{eq: uncontrolled system conclusie}
\end{align}
with $f: \mathbb{R}^n \to \mathbb{R}^n$. Let us assume that an unstable periodic solution $u(t)$ of this system exists; denote its period by $T$. In the Pyragas control scheme, we add a term to the system \eqref{eq: uncontrolled system conclusie} in such a way that the periodic solution $u(t)$ is a also a solution of the controlled system. Usually, we write for the controlled system
\begin{align}
\dot{x}(t) = f(x(t)) + K \left[x(t) -x(t-T) \right] \label{eq: pyragas control conclusie}
\end{align} 
There are, however, variations to this scheme possible. We remark that $u(t)$ is also a periodic solution of the system 
\begin{align}
\dot{x}(t) = f(x(t)) + K_1 \left[x(t) - x(t - T) \right] + K_2 \left[\dot{x}(t) -\dot{x}(t - T) \right] \label{eq: pyragas control afgeleide}
\end{align}
We can investigate for which values of $K_1, K_2$ the solution $u(t)$ of \eqref{eq: pyragas control afgeleide} is stable, and how these values of $K_1, K_2$ compare to the values of $K$ for which $u(t)$ is stable as a solution to \eqref{eq: pyragas control conclusie}. 

Applying the type of control given in \eqref{eq: pyragas control afgeleide} yields the system
\begin{equation}
\begin{aligned} \label{eq: dubbele control}
\dot{z}(t) &= (\lambda + i)z(t) + (1+ i \gamma)\left|z(t)\right|^2 z(t) - K_1 e^{i \beta_1} \left[ z(t) - z(t - \tau) \right] \\ &\qquad- K_2 e^{i \beta_2} \left[\dot{z}(t) - \dot{z}(t - \tau) \right]
\end{aligned}
\end{equation}
which we be rewritten as
\begin{equation} \label{eq: nfde goede vorm}
\begin{aligned}
\dot{z}(t) - \frac{K_2 e^{i \beta_2}}{1 + K_2 e^{i \beta_2}} \dot{z}(t - \tau) &= \frac{1}{1 + K_2 e^{i \beta_2}} \left( (\lambda + i)z(t) + (1+ i \gamma)\left|z(t)\right|^2 z(t)\right) \\ &- \frac{K_1 e^{i \beta_1}}{1 + K_2 e^{i \beta_2}} \left[ z(t) - z(t - \tau) \right].
\end{aligned}
\end{equation}
We note that \eqref{eq: nfde goede vorm} is a neutral functional differential equation. Neutral functional differential equations have very different properties from retarded functional differential equations. For example, for retarded functional differential equations the solution operator $T(t)$ is compact for $t \geq r$ (where $r$ denotes the delay of the system), but for neutral functional differential equations this property does in general not hold. Also, if we fix $\alpha, \beta \in \mathbb{R}$, then for neutral functional differential equations we can have an infinite number of roots of the characteristic equation in a strip $\{z \in \mathbb{C} \mid \alpha \leq z \leq \beta \}$. This cannot occur of retarded functional differential equations. Since we can have an infinite number of eigenvalues in a strip $\{z \in \mathbb{C} \mid \alpha \leq z \leq \beta \}$, it can also occur that all the eigenvalues are in the left half of the complex plane, but the eigenvalues get arbritrary close to the imaginary axis. In this case, it is possible that all eigenvalues are in the left half of the complex plane, but the fixed point of the equation is not stable. However, if we have a so--called spectral gap, i.e. there exists a $\gamma < 0$ such that all the eigenvalues are in the set $\{z \in \mathbb{C} \mid \re z < \gamma \}$, then stability of the fixed point is guaranteed. In the case of a spectral gap, we can use the same methods as in the retarded case to find a Hopf bifurcation theorem for neutral equations. 

\begin{lemma} \label{lem: conditions dubbele control hopf over pyragas}
Let $K_1, K_2, \beta_1, \beta_2$ be such that for $\lambda = 0$, there exists a $\gamma < 0$ such that all roots, expect the root $\mu = i$, of \eqref{eq: ce dubbele control} are in the set $\{z \in \mathbb{C} \mid \re z < \gamma \}$. If 
\begin{align*}
1 + 2 \pi K_1 \left( \cos(\beta_1) + \gamma \sin (\beta_1) \right) - 2 \pi K_2 \left(\sin(\beta_2) - \gamma \cos (\beta_2) \right) > 0
\end{align*}
then the periodic solution \po of \eqref{eq: dubbele control} that exists for $\lambda < 0$ is unstable for small $\lambda < 0$. If 
\begin{align} \label{eq: dubbele control stabiliteitsconditie}
1 + 2 \pi K_1 \left( \cos(\beta_1) + \gamma \sin (\beta_1) \right) - 2 \pi K_2 \left(\sin(\beta_2) - \gamma \cos (\beta_2) \right) < 0
\end{align}
the periodic solution \po of \eqref{eq: dubbele control} that exists for $\lambda < 0$ is stable for small $\lambda < 0$.
\end{lemma}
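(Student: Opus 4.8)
The plan is to reuse the root-tendency argument from the alternative proof of Corollary~\ref{cor: stabiel of instabiele oplossing}, now carried out for the neutral equation \eqref{eq: dubbele control}, and to feed the resulting sign into the neutral analogue of Lemma~\ref{lem: direction hopf bifurcation by roots of characteristic equation}; the latter is legitimate precisely because of the spectral gap hypothesis. Substituting $z(t) = e^{\mu t}$ into the linearization of \eqref{eq: dubbele control} around $z = 0$ gives (up to the nonzero scalar $1 + K_2 e^{i \beta_2}$) the characteristic function \eqref{eq: ce dubbele control},
\begin{align*}
\Delta(\mu) = \mu - (\lambda + i) + K_1 e^{i \beta_1}\left(1 - e^{- \mu \tau}\right) + K_2 e^{i \beta_2}\, \mu \left(1 - e^{- \mu \tau}\right),
\end{align*}
and for $(\lambda, \tau) = (0, 2 \pi)$ one checks at once that $\mu = i$ is a root, since $1 - e^{-2 \pi i} = 0$ annihilates both control terms. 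By the spectral gap assumption $\mu = i$ is the only root on the imaginary axis, and differentiating in $\mu$ shows $\Delta'(i) = 1 + 2 \pi K_1 e^{i \beta_1} + 2 \pi i K_2 e^{i \beta_2}$, so $\mu = i$ is simple exactly when this quantity is nonzero; one should record that the strict inequalities in the statement already force it to be nonzero, since if it vanished its real and imaginary parts would make the left--hand side of both inequalities equal to zero.

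I would then parametrize the Pyragas curve as in \eqref{eq: parametrisatie extended pyragas curve links}, $(\lambda(\theta), \tau(\theta)) = (\theta, 2 \pi / (1 - \gamma \theta))$, let $\mu(\theta)$ denote the root with $\mu(0) = i$, and differentiate $\Delta(\mu(\theta), \theta) = 0$ implicitly at $\theta = 0$. Using $\tau'(0) = 2 \pi \gamma$ together with the vanishing of $1 - e^{- \mu \tau}$ at $\theta = 0$ (which removes the term $K_2 e^{i \beta_2} \mu'(0)(1 - e^{-\mu\tau})$), one obtains a scalar linear equation for $\mu'(0)$ with solution
\begin{align*}
\mu'(0) = \frac{1 - 2 \pi i \gamma K_1 e^{i \beta_1} + 2 \pi \gamma K_2 e^{i \beta_2}}{1 + 2 \pi K_1 e^{i \beta_1} + 2 \pi i K_2 e^{i \beta_2}}.
\end{align*}
Multiplying numerator and denominator by the conjugate of the denominator and taking real parts, the mixed $K_1 K_2 \gamma$ contributions cancel and one is left with
\begin{align*}
\left. \frac{d \re \mu}{d \theta} \right|_{\theta = 0} = \frac{1 + 2 \pi K_1 \left( \cos \beta_1 + \gamma \sin \beta_1 \right) - 2 \pi K_2 \left( \sin \beta_2 - \gamma \cos \beta_2 \right)}{\left| 1 + 2 \pi K_1 e^{i \beta_1} + 2 \pi i K_2 e^{i \beta_2} \right|^2},
\end{align*}
so that the sign of the root tendency along the Pyragas curve is exactly the sign of the expression appearing in the lemma.

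Finally I would invoke the neutral analogue of Lemma~\ref{lem: direction hopf bifurcation by roots of characteristic equation}. By continuity of the spectrum in the parameter, the spectral gap at $\lambda = 0$ persists for small $\lambda$ along the Pyragas curve, so that all roots other than the branch $\mu(\theta)$ stay in a fixed left half--plane; this makes the Hopf bifurcation theorem for neutral equations applicable at $(\lambda, \tau) = (0, 2 \pi)$ and, at the same time, upgrades ``all eigenvalues in the left half--plane'' to genuine stability of the fixed point. As noted in the text preceding the lemma, \eqref{eq: periodic orbit} is a periodic solution of \eqref{eq: dubbele control} for $(\lambda, \tau)$ on the Pyragas curve, and by uniqueness of the Hopf orbit near the bifurcation point it coincides with the orbit created by the bifurcation. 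Hence, if the expression in the statement is positive, then $\re \mu(\theta) < 0$ for $\theta < 0$ near $0$, the bifurcation is subcritical, and \eqref{eq: periodic orbit} is unstable; if it is negative, then $\re \mu(\theta) > 0$ for $\theta < 0$ near $0$, the bifurcation is supercritical, and \eqref{eq: periodic orbit} is stable. The computation itself is only a mild extension of the one behind Corollary~\ref{cor: stabiel of instabiele oplossing}, so the genuine obstacle is the bifurcation--theoretic step: one must check that the centre--manifold reduction and the Hopf normal form carry over to the neutral setting and that the spectral gap survives the $\lambda$-perturbation, which is exactly where the structural differences between neutral and retarded equations described before the lemma must be handled carefully.
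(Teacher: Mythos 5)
Your proposal follows essentially the same route as the paper's own proof: compute the root tendency of the branch through $\mu = i$ along the extended Pyragas curve by implicit differentiation of the characteristic function \eqref{eq: ce dubbele control}, arrive at the same expression for $\left.\tfrac{d \re \mu}{d\theta}\right|_{\theta=0}$, and combine the spectral gap hypothesis with the neutral version of the Hopf bifurcation theorem and Theorem~\ref{lem: direction hopf bifurcation by roots of characteristic equation} to read off the direction and hence the (in)stability of \po. Your two small additions — retaining the positive factor $\left|1 + 2\pi K_1 e^{i\beta_1} + 2\pi i K_2 e^{i\beta_2}\right|^{-2}$ explicitly, and observing that the strict inequalities in the statement already force the simplicity of the root $\mu = i$ — are correct and slightly tighten the argument, but do not change the approach.
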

\begin{proof}
We note that the characteristic equation corresponding to the linearization of \eqref{eq: dubbele control} around $z = 0$ is given by
\begin{align} \label{eq: ce dubbele control}
\Delta(\mu) = \mu - (\lambda + i) + K_1 e^{i \beta_1} (1 - e^{- \mu \tau}) + K_2 e^{i \beta_2} \mu (1 - e^{- \mu \tau})
\end{align}
We have that $\Delta(i) = 0$ for $\lambda = 0$ and $\tau = 2 \pi$. We determine whether the root $\mu =i$ moves in our out of the right half of the complex plane if approach the point $(\lambda, \tau) =(0, 2 \pi)$ over the extended Pyragas curve from the left. 

Parametrize the extended Pyragas curve as in \eqref{eq: parametrisatie extended pyragas curve links}. For $\theta$ near 0, write $\mu = \mu(\theta)$ satisfying $\Delta(\mu(\theta)) = 0$ for $\lambda = \lambda(\theta)$ and $\tau = \tau(\theta)$ with $\mu(0)  =i$. Then differentation of \eqref{eq: ce dubbele control} with respect to $\theta$ yields
\begin{align*}
0 &= \left. \frac{d \mu}{d \theta}\right|_{\theta = 0} - 1 + K_1 e^{i \beta_1} e^{- \mu(0) \tau(0)} \left( \mu(\theta) \left. \frac{d \tau}{d \theta} \right|_{\theta = 0} + \left. \frac{d \mu}{d \theta}\right|_{\theta = 0} \tau(0) \right)  \\ 
&+ K_2 e^{i \beta_2} \left. \frac{d \mu}{d \theta}\right|_{\theta = 0} (1 - e^{- \mu(0) \tau(0)}) + K_2 e^{i \beta_2} \mu(0) \left(  \mu(0) \left. \frac{d \tau}{d \theta} \right|_{\theta  = 0} + \left. \frac{d \mu}{d \theta}\right|_{\theta = 0} \tau(0) \right) \\
& = \left. \frac{d \mu}{d \theta}\right|_{\theta = 0} - 1 + K_1e^{i \beta_1} \left(2 \pi i\gamma + 2 \pi \left. \frac{d \mu}{d \theta}\right|_{\theta = 0}\right) + K_2 e^{i \beta_2} i \left(2 \pi i \gamma + 2 \pi \left. \frac{d \mu}{d \theta}\right|_{\theta = 0} \right)
\end{align*}
which can be rewritten as
\begin{align*}
\left. \frac{d \mu}{d \theta}\right|_{\theta = 0} \left(1 + 2 \pi K_1 e^{i \beta_1} + 2 \pi i K_2 e^{i \beta_2} \right) = 1 - 2 \pi \gamma i K_1 e^{i \beta_1} + 2 \pi \gamma K_2 e^{i \beta_2}. 
\end{align*}
With $a = 1 + 2 \pi K_1 e^{i \beta_1} + 2 \pi i K_2 e^{i \beta_2}$ this gives
\begin{align*}
\left. \frac{d \mu}{d \theta}\right|_{\theta = 0} &= \frac{1}{\left| a \right|^2} \left(1 - 2 \pi \gamma i K_1 e^{i \beta_1} + 2 \pi \gamma K_2 e^{i \beta_2}  \right) \left( 1 + 2 \pi K_1 e^{- i \beta_1} - 2 \pi i K_2 e^{- i \beta_2}\right) \\
& = \frac{1}{\left| a \right|^2} \left( 1 + 2 \pi K_1 e^{- i \beta_1} - 2 \pi K_2 i e^{-i \beta_2} - 2 \pi \gamma i K_1 e^{i \beta_1} - 4 \pi^2 \gamma i K_1^2 \right. \\  &- \left.  K_1 K_2 4 \pi^2 \gamma e^{i(\beta_1 - \beta_2)} + 2 \pi \gamma K_2 e^{i \beta_2} + 4 \pi^2 \gamma K_1 K_2 e^{i(\beta_2 - \beta_1)} - 4 \pi^2 \gamma i K_2^2 \right).
\end{align*}
After taking the real part we arrive at
\begin{align*}
\re \left. \frac{d \mu}{d \theta}\right|_{\theta = 0} &= \left. \frac{d \re \mu}{d \theta}\right|_{\theta = 0} \\&= 1 + 2 \pi K_1 \cos \beta_1 - 2 \pi K_2 \sin \beta_2 + 2 \pi \gamma K_1 \sin \beta_1 + 2 \pi \gamma K_2 \cos \beta_2 \\
& = 1 + 2 \pi K_1 \left(\cos \beta_1 + \gamma \sin \beta_1\right) - 2 \pi K_2 (\sin \beta_2 - \gamma \cos \beta_2).
\end{align*}
If $1 + 2 \pi K_1 \left(\cos \beta_1 + \gamma \sin \beta_1\right) - 2 \pi K_2 (\sin \beta_2 - \gamma \cos \beta_2) \neq 0$ and for $\lambda = 0$ all the roots of \eqref{eq: ce dubbele control} except $\mu = i$ are in the left half of the complex plane, then the conditions of the Hopf bifurcation theorem for neutral functional differential equations are satisfied. An application of Lemma \ref{lem: direction hopf bifurcation by roots of characteristic equation} now yields the result.  
\end{proof}

Let us study the case $\gamma = -10, \beta_1 = \beta_2 = \frac{\pi}{4}$. In order to apply Lemma \ref{lem: conditions dubbele control hopf over pyragas} we are interested in values of $K_1, K_2$ such that there exists a $\gamma < 0$ such that all roots, expect the root $\mu = i$, of \eqref{eq: ce dubbele control} are in the set $\{z \in \mathbb{C} \mid \re z < \gamma \}$. We note that if 
\begin{align} \label{eq: stable d operator}
\left| \frac{K_2 e^{i \beta_2}}{1 + K_2 e^{i \beta_2}} \right| < 1
\end{align}
(i.e. we have a stable $D$--operator), then this condition is automatically satisfied. Now let us choose $K_1$ close to zero; using DDEBiftool, we find that for $K_2 = 0$ and some (fixed) $K_1$ small, the characterstic equation \eqref{eq: ce dubbele control} has no roots in the right half of the complex plane. Since for the case $K_2 = 0$, \eqref{eq: dubbele control} reduces to a retarded equation, we automatically have a spectral gap in this case. One can proof that a root of \eqref{eq: ce dubbele control} must cross the imaginary axis to move form the left to the right half of the complex plane. Using this, 
one can draw a stability chart to show that for points inside the region whose boundary is parametrized by
\begin{equation} \label{eq: parametrisatie}
\begin{aligned}
K_1 &= \frac{1}{2 \sin (\omega \pi)} (1 - \omega)\cos(\omega \pi - \beta) \\
K_2 &= \frac{1}{2 \omega \sin (\omega \pi)} (1 - \omega)\sin(\omega \pi - \beta)
\end{aligned}
\end{equation}
with $\omega \in (0, 2)$
no roots of \eqref{eq: ce dubbele control} are in the right half of the complex plane (the region enclosed by the curve in Figure \ref{fig:region}). Thus, if we $K_1, K_2$ are inside the region enclosed by the curve in Figure \ref{fig:region} and the condition \eqref{eq: stable d operator} is satisfied, we have a spectral gap. If then also \eqref{eq: dubbele control stabiliteitsconditie} is satisfied, we can apply  Lemma \ref{lem: conditions dubbele control hopf over pyragas} to find that the periodic solution \po of \eqref{eq: dubbele control} is stable for small $\lambda < 0$ (see Figure \ref{fig:region}). 

We can of course also choose $\beta_1 \neq \beta_2$: see Figure \ref{fig:region2} for the case where we have chosen $\gamma = -10, \beta_1 = - \frac{\pi}{4}$ and $\beta_2 = \frac{3 \pi}{4}$.
\begin{figure}[h!]
\hspace{-20pt}
\begin{subfigure}{0.4\textwidth}
\centering
        \includegraphics[width=\textwidth]{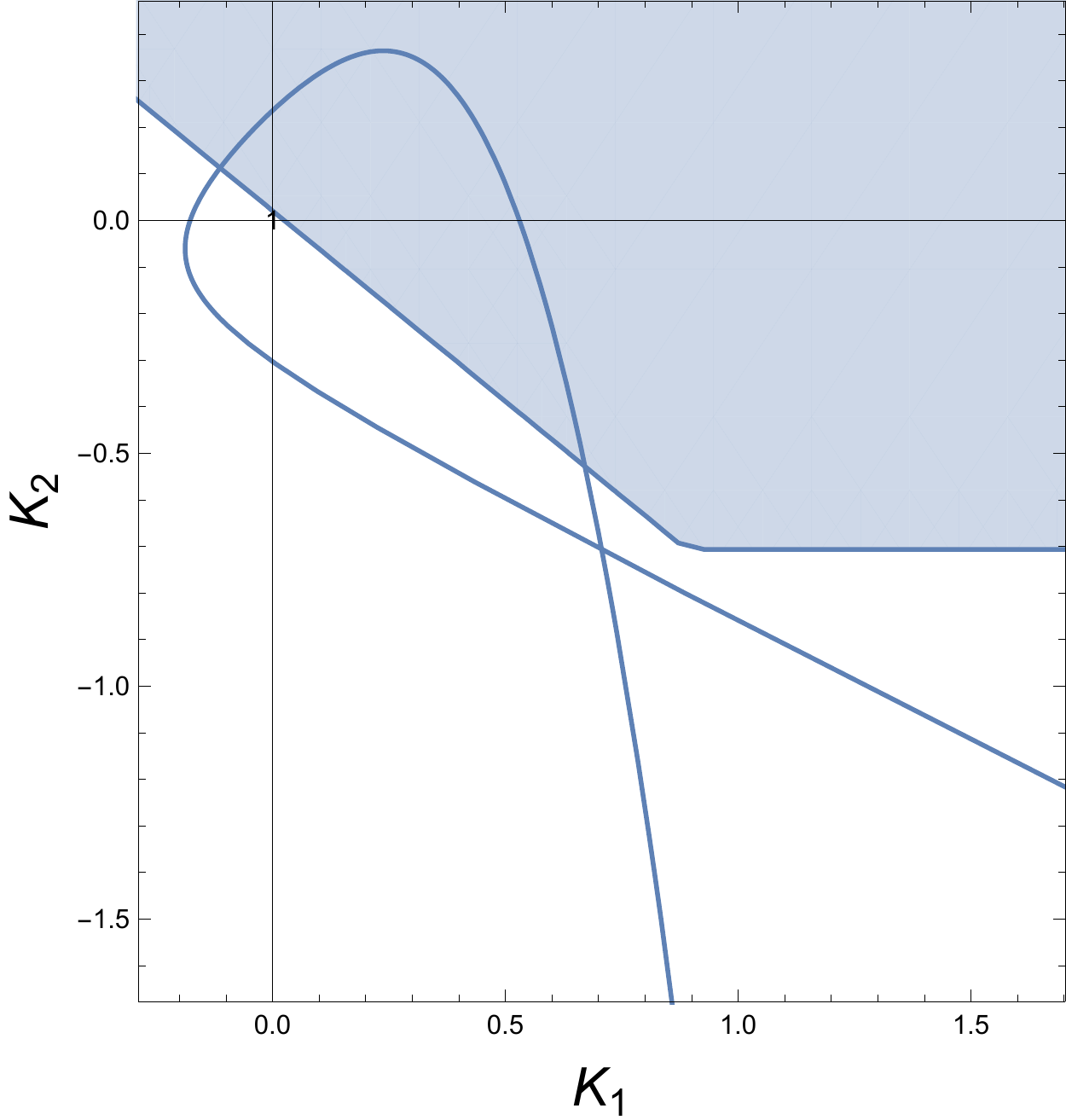}
        \caption{Curve parametrized by \eqref{eq: parametrisatie} for $\omega \in (0, 2)$ with $\beta = \frac{\pi}{4}$, $\gamma = -10$. The shaded region inside indicates the region where both the conditions \eqref{eq: dubbele control stabiliteitsconditie} and \eqref{eq: stable d operator} are satisfied.}
        \label{fig:region}
\end{subfigure}
\hspace{20pt}
\begin{subfigure}{0.4\textwidth}
\centering
        \includegraphics[width = \textwidth]{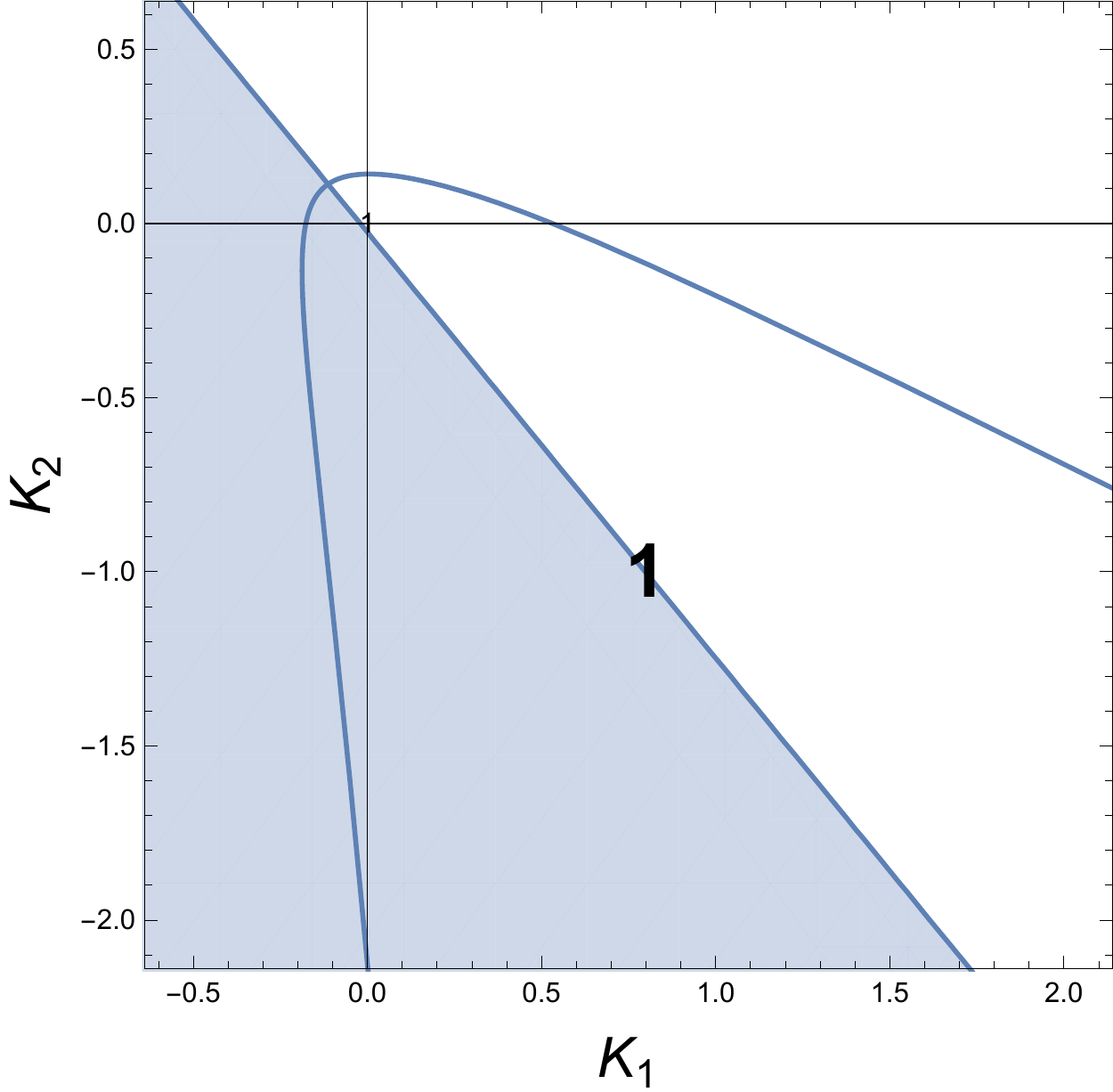}
        \caption{For the case $\gamma = -10, \beta_1 = -\frac{\pi}{4}, \beta_2 = \frac{3}{4} \pi$, we have no roots in the right half of the complex plane for $(K_1, K_2)$ inside the curve (region 1). The shaded region inside indicates the region where both the conditions \eqref{eq: dubbele 	control stabiliteitsconditie} and \eqref{eq: stable d operator} are satisfied.}
        \label{fig:region2}
\end{subfigure}
\end{figure}

Now that we have determined stability conditions for \eqref{eq: periodic orbit} to be stable as a solution of \eqref{eq: dubbele control}, a number of questions arise naturally. For the specific example discussed here, one is interested how the range of values of $\lambda$ for which the periodic orbit \po is (un)stable as a solution of \eqref{eq: dubbele control} compares to the range of values of $\lambda$ for which \po is (un)stable as a solution of \nf. Furthermore, if $\po$ is stable as a solution of both \eqref{eq: dubbele control} and \nf, it is also interesting to study how the basin of attraction in both situations compare. More generally, one would like to apply the control scheme \eqref{eq: pyragas control afgeleide} to various systems or consider different control schemes including a `neutral term'. We hope to return to these questions in the future.

\bibliographystyle{amsplain}

\end{document}